\setlist[itemize]{leftmargin=*} %sets itemize indentation to 0
\def \smvx {circle[radius = .07][fill = black]}
\tikzstyle{edge}=[very thick]
\definecolor{bostonuniversityred}{rgb}{0.8, 0.0, 0.0}
\definecolor{arsenic}{rgb}{0.23, 0.27, 0.29}
\tikzstyle{diredge}=[postaction={decorate,decoration={markings,
\newcommand{\defPt}[3]{
	\def \pt {(#1, #2)}
	\coordinate [at = \pt, name = #3];
}
\newcommand{\defPtm}[2]{

	\coordinate [at = #1, name = #2];
}
\newcommand{\fitellipsis}[2] % first and second node names without parentheses
{\draw[fill=white] let \p1=(#1), \p2=(#2), \n1={atan2(\y2-\y1,\x2-\x1)}, \n2={veclen(\y2-\y1,\x2-\x1)}
    in ($ (\p1)!0.5!(\p2) $) ellipse [ x radius=\n2/2+0cm, y radius=0.5cm, rotate=\n1];
}
\newcounter{claimcount}
\theoremstyle{plain}
\newtheorem{theorem}{Theorem}
\Crefname{theorem}{Theorem}{Theorems}
\newtheorem*{thm*}{Theorem}
\newtheorem{thm}{Theorem}
\Crefname{thm}{Theorem}{Theorems}
\numberwithin{thm}{section}
\newtheorem*{lem*}{Lemma}
\newtheorem{lem}[thm]{Lemma}
\Crefname{lem}{Lemma}{Lemmas}
\newtheorem*{claim*}{Claim}
\newtheorem{claim}[claimcount]{Claim}
\crefname{claim}{Claim}{Claims}
\Crefname{claim}{Claim}{Claims}
\Crefname{prop}{Proposition}{Propositions}
\newtheorem{cor}[thm]{Corollary}
\crefname{cor}{Corollary}{Corollaries}
\crefname{conj}{Conjecture}{Conjectures}
\Crefname{qn}{Question}{Questions}
\Crefname{obs}{Observation}{Observations}
\Crefname{ex}{Example}{Examples}
\theoremstyle{definition}
\Crefname{prob}{Problem}{Problems}
\Crefname{defn}{Definition}{Definitions}
\newtheorem*{defn*}{Definition}
\Crefname{figure}{Figure}{Figures}
\theoremstyle{remark}
\renewenvironment{proof}[1][]{\begin{trivlist}
\item[\hspace{\labelsep}{\bf\noindent Proof#1.\/}] }{\qed\end{trivlist}}
\newcommand{\eps}{\varepsilon}
\newcommand{\M}{\mathcal{M}}
\def\expandafter\normalsize\expandafter{%
    \normalsize
    \setlength\abovedisplayskip{4pt}
    \setlength\belowdisplayskip{4pt}
    \setlength\abovedisplayshortskip{4pt}
    \setlength\belowdisplayshortskip{4pt}
}
\newcommand{\E}{\mathbb{E}}
\renewcommand{\P}{\mathbb{P}}
\title{Clique minors in graphs with a forbidden subgraph}
\author{%
  Matija Buci\'c\thanks{Department of Mathematics, ETH, Z\"urich, Switzerland. Email: \href{mailto:matija.bucic@math.ethz.ch} {\nolinkurl{matija.bucic@math.ethz.ch}}.}
  \and
    Jacob Fox\thanks{Department of Mathematics, Stanford University, Stanford, CA 94305. Email: {\tt jacobfox@stanford.edu}. Research supported by a Packard Fellowship  and by NSF Award DMS-1855635.}
\and  Benny Sudakov\thanks{Department of Mathematics, ETH, Z\"urich, Switzerland. Email:
\href{mailto:benjamin.sudakov@math.ethz.ch} {\nolinkurl{benjamin.sudakov@math.ethz.ch}}.
Research supported in part by SNSF grant 200021\texttt{\_}196965.}
}
 \date{}
\begin{document}
\maketitle
\begin{abstract}
The classical Hadwiger conjecture dating back to 1940's states that any graph of chromatic number at least $r$ has the clique of order $r$ as a minor. Hadwiger's conjecture is an example of a well-studied class of problems asking how large a clique minor one can guarantee in a graph with certain restrictions. One problem of this type asks what is the largest size of a clique minor in a graph on $n$ vertices of independence number $\alpha(G)$ at most $r$. If true Hadwiger's conjecture would imply the existence of a clique minor of order $n/\alpha(G)$. Results of K\"uhn and Osthus and Krivelevich and Sudakov imply that if one assumes in addition that $G$ is $H$-free for some bipartite graph $H$ then one can find a polynomially larger clique minor. This has recently been extended to triangle-free graphs by Dvo\v{r}\'ak and Yepremyan, answering a question of Norin. We complete the picture and show that the same is true for arbitrary graph $H$, answering a question of Dvo\v{r}\'ak and Yepremyan. In particular, we show that any $K_s$-free graph has a clique minor of order $c_s(n/\alpha(G))^{1+\frac{1}{10(s-2) }}$, for some constant $c_s$ depending only on $s$. The exponent in this result is tight up to a constant factor in front of the $\frac{1}{s-2}$ term.
\end{abstract}

\section{Introduction}
A graph $\Gamma$ is said to be a \textit{minor} of a graph $G$ if for every vertex $v$ of $\Gamma$ we can choose a connected subgraph $G_u$ of $G$, such that subgraphs $G_u$ are vertex disjoint and $G$ contains an edge between $G_v$ and $G_{v'}$ whenever $v$ and $v'$ make an edge in $\Gamma$. The notion of graph minors is one of the most fundamental concepts of modern graph theory and has found many applications in topology, geometry, theoretical computer science and optimisation; for more details, see the excellent surveys \cite{lovasz-survey,norin-survey}. Many of these applications have their roots in the celebrated Robertson-Seymour theory of graph minors, developed over more than two decades and culminating in the proof of Wagner's conjecture \cite{R-S}. One of several equivalent ways of stating this conjecture is that every family of graphs, closed under taking minors can be characterised by a finite family of excluded minors. A forerunner to this result is Kuratowski's theorem \cite{kuratowski}, one of the most classical results of graph theory dating back to 1930. In a reformulation due to Wagner \cite{wagner-kuratowski} it postulates that a graph is planar if and only if neither $K_5$ nor $K_{3,3}$ are its minors.

Another cornerstone of graph theory is the famous $4$-colour theorem dating back to 1852 which was finally settled with the aid of computers in 1976 by \cite{four-color}. It states that every planar graph $G$ has chromatic number\footnote{The chromatic number of a graph $G$, denoted $\chi(G)$, is the minimum number of colours required to colour vertices of $G$ so that there are no adjacent vertices of the same colour.} at most four. In light of Kuratowski's theorem, Wagner \cite{wagner-4-col} has shown that in fact the $4$-colour theorem is equivalent to showing that every graph without $K_5$ as a minor has $\chi(G) \le 4$. In 1943 Hadwiger proposed a natural generalisation, namely that every graph with $\chi(G) \ge r$ has $K_{r}$ as a minor. Hadwiger's conjecture is known for $r \le 5$ (for the case of $r=5$ see \cite{hadwiger-5}) but despite receiving considerable attention over the years it is still widely open for $r \ge 6,$  see \cite{hadwiger-survey} for the current state of affairs. 

In this paper we study the question of how large a clique minor can one guarantee to find in a graph $G$ which belongs to a certain restricted family of graphs. A prime example of this type of problems is Hadwiger's conjecture itself. Another natural example asks what happens if instead of restricting the chromatic number we assume a lower bound on the average degree. Note that $\chi(G) \ge r$ implies that $G$ has a subgraph of minimum degree at least $r-1$. So the restriction in this problem is weaker than in Hadwiger's conjecture and we are interested in how far can this condition take us. This question, first considered by Mader \cite{mader} in 1968, was answered in the 80's independently by Kostochka \cite{kost} and Thomason \cite{thom} who show that a graph of average degree $r$ has a clique minor of order $\Theta(r/\sqrt{\log r})$. This is best possible up to a constant factor as can be seen by considering a random graph with appropriate edge density (whose largest clique minor was analysed by Bollob\'as, Catlin and Erd\H{o}s in \cite{BCE}). 

This unfortunately means that this approach is not strong enough to prove Hadwiger's conjecture for all graphs. For almost four decades, bounding the chromatic number through average degree and using the Kostochka-Thomason theorem gave the best known lower bound on the clique minor given the chromatic number. Very recently, Norin, Postle, and Song \cite{norinpostlesong} got beyond this barrier and in a series of works \cite{Postle1, Postle2} Postle obtained the currently best result, by showing that every graph of chromatic number $r$ has a clique minor of size $\Omega(r/(\log\log r)^6)$. This still falls short of proving Hadwiger's conjecture for all graphs. 

However, if we impose some additional restrictions on the graph it turns out we can do much better. One of the most natural restrictions, frequently studied in combinatorics, is to require our graph $G$ to be $H$-free for some other, small graph $H$. This problem was first considered by K\"uhn and Osthus \cite{K-O} who showed that given $s \le t$ every $K_{s,t}$-free graph with average degree $r$ has a clique minor of order $\Omega \left(r^{1+2/(s-1)}/\log^3 r\right).$ The polylog factor in this result was subsequently improved by Krivelevich and Sudakov \cite{B-M} who obtained in a certain sense the best possible bound. They also obtain tight results, for the case of $C_{2k}$-free graphs. These results show Hadwiger's conjecture holds in a stronger form for any $H$-free graph, provided $H$ is bipartite. On the other hand, if $H$ is not bipartite then taking $G$ to be a random bipartite graph shows that the bound of Kostochka \cite{kost} and Thomason \cite{thom} can not be improved.

A natural next question is whether we can do better if we assume a somewhat stronger condition than a bound on the average degree or the chromatic number. A natural candidate is an upper bound on $\alpha(G)$, the size of a largest independent set in $G$. Indeed, the chromatic number of a graph $G$ is at least $n/\alpha(G)$. An old conjecture, which is implied by Hadwiger's conjecture, (see \cite{hadwiger-survey}) states that if $\alpha(G) \le r$ then $G$ has a clique minor of order $n/r$. Duchet and Meyniel \cite{D-M} showed in 1982 that this conjecture holds within a factor of $2$, which was subsequently improved by \cite{fox,B-G,BLW,Pedersentoft,PlummerStiToft,kawa1,kawa2,woodall,maffraymeyniel}, most notably Fox \cite{fox} gave the first improvement of the multiplicative constant $2$. Building upon the ideas of \cite{fox}, Balogh and Kostochka \cite{B-G} obtain the best known bound to date.

In light of these results, Norin asked whether in this case assuming additionally that $G$ is triangle-free allows for a better bound. This question was answered in the affirmative by Dvo\v{r}\'ak and Yepremyan \cite{D-Y} who show that for $n/r$ large enough, a triangle-free $n$-vertex graph with $\alpha(G) \le r$ has a clique minor of order $(n/r)^{1+\frac{1}{26}}.$ They naturally ask if the same holds if instead of triangle-free graphs we consider $K_s$-free graphs. We show that this is indeed the case.

\begin{theorem}\label{thm:main}
Let $s\ge 3$ be an integer. Every $K_s$-free $n$-vertex graph $G$ with $\alpha(G) \le r$ has a clique minor of order at least  $(n/r)^{1+ \frac1{10(s-2)}},$ provided $n/r$ is large enough.
\end{theorem}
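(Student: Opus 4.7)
The plan is to adapt the BFS-based approach used by Dvo\v{r}\'ak and Yepremyan for triangle-free graphs to the general $K_s$-free setting, with a more refined handling of neighborhood structure via Ramsey-type arguments. The goal is to construct pairwise vertex-disjoint connected sets $B_1,\ldots,B_t$ in $G$, with $t = \Theta_s\bigl((n/r)^{1+1/(10(s-2))}\bigr)$, such that every pair $B_i,B_j$ has a connecting edge, so that they form the branch sets of a $K_t$-minor.

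First I would set up structural estimates. For any vertex $v$, its neighborhood $N(v)$ is $K_{s-1}$-free with independence number at most $r$, so the Ramsey bound $R(s-1,r+1)=O(r^{s-2})$ gives a maximum degree of $\Delta = O(r^{s-2})$; more generally, for any clique $K_q$ in $G$ the common neighborhood is $K_{s-q}$-free with $\alpha \le r$, so iterated Ramsey bounds $R(s-q,r+1)$ control the sizes of $q$-fold common neighborhoods. On the other hand, Tur\'an's theorem together with $\alpha(G)\le r$ ensures at least $\Omega(n^2/r)$ edges, and standard iterative pruning yields an induced subgraph on $\Theta(n)$ vertices with minimum degree $\Omega(n/r)$; the whole construction then takes place inside this pruned subgraph. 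I would next build the initial branch sets as BFS balls of a carefully chosen radius $\ell$, extracted greedily (with randomization if needed) around well-separated centers, with $\ell$ balanced so that $t$ disjoint balls fit inside $G$ while each ball has a rich external neighborhood; BFS-layer growth is bounded above by $\Delta$ and below by the minimum degree.

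The main obstacle will be guaranteeing a direct edge between every pair of resulting branch sets. To handle it, I would augment each initial ball $T_i$ with additional absorber vertices drawn from a reserved pool, deferring the choice of which absorbers go where. The two hypotheses are used in tandem: $\alpha(G)\le r$ forces short common paths between non-adjacent branch sets, since independent sets inside joint external neighborhoods have bounded size, while the $K_s$-free condition prevents any single vertex from connecting to too many branch sets in a structured way (a common neighborhood of size much larger than $r^{s-2}$ would contain a $K_{s-1}$, and hence, together with the common vertex, a $K_s$). Iterating the Ramsey inequality through $s-2$ nested stages --- in the spirit of how K\"uhn-Osthus and Krivelevich-Sudakov handle $K_{s,t}$-free graphs --- yields the $(s-2)$ in the denominator of the exponent, with the constant $10$ absorbing slack from union bounds and pruning losses. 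The delicate step, and the one I expect to be the hardest to push through, is the simultaneous calibration of the BFS radius $\ell$, the target count $t$, and the size of the absorber pool so that every pair of branch sets ends up joined by an edge while still producing a polynomial improvement over the baseline $n/r$.
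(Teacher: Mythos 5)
Your proposal takes a genuinely different route from the paper, but it contains a real gap that prevents it from being a proof.

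The paper does not build branch sets by BFS. Instead it uses a simple two-stage random contraction: each vertex is colored red with probability $p$ (blue otherwise), each blue vertex attaches itself to a uniformly random red neighbor, and the resulting stars are contracted. The analysis then hinges on ``activated'' $3$-paths $vxyu$ (both endpoints red, both internal vertices blue, $x$ chooses $v$, $y$ chooses $u$), whose activation creates an edge $vu$ in the minor. The $K_s$-free hypothesis enters via a Ramsey lemma (half the vertices of an $m$-vertex $K_s$-free graph can be covered by $O(m^{1-1/(s-1)})$ independent sets), which is used to organize $N(v)$ and the second neighborhood into independent sets and thereby carve out, for each fixed $v$, a collection of $3$-paths whose middle edges have very limited overlap. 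Together with first- and second-moment bounds on activation probabilities, this controls the multiplicity of parallel edges and produces a minor of average degree $\gg d\sqrt{\log d}$, from which Kostochka--Thomason yields a $K_d$ minor. The expansion property ($\alpha(G)\le r$) is used as an ``independent sets expand'' assumption, and a short induction removes it afterwards. The paper specifically notes that this method gives a simpler proof of the triangle-free case than Dvo\v{r}\'ak--Yepremyan's, so adapting their BFS machinery to general $s$ is a strictly harder road than the one the authors take.

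The concrete gap in your proposal is the step you yourself flag: you leave ``augment each initial ball $T_i$ with additional absorber vertices drawn from a reserved pool'' entirely unspecified, and this is precisely the step where all of the difficulty lives. For $s\ge 4$ the structure of $G$ is far looser than in the triangle-free case, the maximum degree $\Delta=O(r^{s-2})$ is much larger than the minimum degree $\Omega(n/r)$ that pruning gives you, and ``iterating the Ramsey inequality through $s-2$ nested stages'' is a slogan, not an argument: it does not by itself show that the BFS balls can be made to grow at a controlled rate, nor that the absorber pool can simultaneously serve $\binom{t}{2}$ pairs without collisions, nor that the bookkeeping loses only a constant factor in the exponent. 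Without a precise absorber lemma (what an absorber is, how many are needed per pair, and why the pool is large enough to avoid reuse), the claim that every pair of branch sets ends up adjacent is unsupported, and the exponent $1+\frac{1}{10(s-2)}$ never actually emerges from a calculation. As written this is a research plan, not a proof.
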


For the case of $s=3$ our result has a simpler proof and gives a better constant in the exponent compared to that in \cite{D-Y}. As an additional illustration we use our strategy to obtain a short proof of a result of K\"uhn and Osthus \cite{K-O} about finding clique minors in $K_{s,t}$-free graphs. The above-mentioned two examples put quite different restrictions on the structure of the underlying graph, nevertheless our approach performs well in both cases. This leads us to believe that our strategy, or minor modifications of it, could provide a useful tool for finding clique minors in graphs under other structural restrictions as well. The strategy is simple enough to be worth describing in the Introduction.

\subsection*{Method}
Given a graph $G$ our strategy for finding minors of large average degree goes as follows:
\begin{enumerate}
    \item We independently colour each vertex of $G$ red with probability $p$ and blue otherwise. 
    \item Each blue vertex chooses independently one of its red neighbours (if one exists) uniformly at random.
\end{enumerate}
This decomposes the graph into stars, either centred at a red vertex with leaves being the blue vertices, which have chosen the central vertex as their red neighbour or being isolated blue vertices which had no red neighbours to choose from. We obtain our random minor $\M(G,p)$ by contracting each star into a single vertex and deleting the isolated blue vertices. 

We note that similar strategies were employed by both K\"uhn and Osthus \cite{K-O}, and Dvo\v{r}\'ak and Yepremyan \cite{D-Y}. Our strategy above streamlines their approaches for finding dense minors. This helps us to develop a new way of analysing the outcome, allowing us to answer the above question of Dvo\v{r}\'ak and Yepremyan \cite{D-Y} as well as to obtain simpler proofs of the results of both \cite{K-O} and \cite{D-Y}.

\textbf{Notation:}  For a graph $G$, we denote by $V(G)$ its vertex set and by $E(G)$ its edge set and by $\delta(G)$, $d(G)$ and $\Delta(G)$ its minimum, average and maximum degree. For $v \in V(G)$ let $d_G(v)$ be the degree of $v$ in $G$ (we often write just $d(v)$ when the underlying graph is clear). If $V(G)$ is red-blue coloured, we denote by $d_r(v)$ the number of red neighbours of $v$. For a subset $S \subseteq V(G)$, let $N(S)$ be the set of vertices adjacent to at least one vertex in $S$. For us an $\ell$-path is a path of length $\ell$, which consists of $\ell$ edges and $\ell+1$ vertices. For a path $v_1v_2\ldots v_n$ we say $v_1$ and $v_n$ are its endvertices and $v_2,\ldots,v_{n-1}$ are its internal vertices. %$K_s$ denotes the complete graph on $n$ vertices, $K_{s,t}$ the complete bipartite graph with parts of size $s$ and $t$. A star with $s$ leaves we will call an $s$-fan.
%M \sim ?

\section{Setting up the framework and an example}
We begin the section by stating some well-known tools which we are going to use.

\begin{lem}[Chernoff bound, for a proof see \cite{alon-spencer}]\label{lem:chernoff}
Let $X_1, \ldots, X_d$ be independent random variables, taking value $1$ with probability $p$ and $0$ otherwise and let $X= \sum_{i=1}^d X_i$. Then $\P(X>2pd) \le e^{-pd/3}$ and $\P(X<\frac12pd) \le e^{-pd/8}$.
\end{lem}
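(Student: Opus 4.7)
The plan is to use the standard exponential moment (Chernoff's) method: for $t > 0$, Markov's inequality gives $\P(X \ge a) \le e^{-ta}\, \E[e^{tX}]$, and then I would optimize $t$.

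First I would compute the moment generating function of a single variable: since $X_i \in \{0,1\}$ with $\P(X_i = 1) = p$, we have $\E[e^{tX_i}] = 1 + p(e^t - 1) \le e^{p(e^t - 1)}$, using $1 + x \le e^x$. By independence this yields $\E[e^{tX}] \le e^{pd(e^t - 1)}$, so for any real $t$,
\begin{equation*}
\P(X \ge a) \le \exp\bigl(pd(e^t - 1) - ta\bigr) \quad (t > 0), \qquad \P(X \le a) \le \exp\bigl(pd(e^t - 1) - ta\bigr) \quad (t < 0).
\end{equation*}

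For the upper tail I would set $a = 2pd$ and choose $t = \ln 2$, which gives $pd(e^t - 1) - ta = pd(2-1) - 2pd \ln 2 = pd(1 - 2\ln 2) \le -pd/3$, since $1 - 2\ln 2 \approx -0.386 < -1/3$. This yields $\P(X > 2pd) \le e^{-pd/3}$. For the lower tail I would set $a = pd/2$ and choose $t = -\ln 2$ (equivalently, apply the same calculation to the variables $1 - X_i$); this produces $pd(e^{-\ln 2} - 1) - (-\ln 2)(pd/2) = pd(-1/2 + (\ln 2)/2) = pd(\ln 2 - 1)/2 \le -pd/8$, since $(1 - \ln 2)/2 \approx 0.153 > 1/8$.

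There is no real obstacle — the proof is entirely mechanical once one commits to the exponential-moment trick. The only modest decision is the choice of $t$; picking $t = \pm \ln 2$ (rather than the numerically optimal value) gives clean bounds whose constants comfortably beat $1/3$ and $1/8$, which is all the statement asks for. Since the lemma is quoted verbatim from \cite{alon-spencer}, a writer could also simply cite it and omit the argument entirely.
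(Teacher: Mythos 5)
Your proof is correct, and the constants check out: $1 - 2\ln 2 \approx -0.386 < -1/3$ and $(\ln 2 - 1)/2 \approx -0.153 < -1/8$. The paper gives no proof of its own — it simply cites Alon--Spencer — and your exponential-moment argument is precisely the standard derivation found there, so this matches the intended approach.
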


\begin{lem}\label{lem:maxcut} Every graph $G$ has a spanning bipartite subgraph in which every vertex has degree at least half as big as it had in $G$.
\end{lem}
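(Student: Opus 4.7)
The plan is to use the standard maximum cut argument. I would start with a partition $(A,B)$ of $V(G)$ that maximizes the number of edges between $A$ and $B$ (such a partition exists because there are only finitely many partitions). Let $H$ be the spanning bipartite subgraph consisting of all edges with one endpoint in $A$ and one in $B$.

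Next I would verify the degree condition vertex by vertex. Fix any $v \in V(G)$, and assume without loss of generality that $v \in A$. Let $a$ be the number of neighbors of $v$ in $A$ and $b$ the number of neighbors in $B$, so $d_G(v) = a + b$ and $d_H(v) = b$. If it were the case that $b < a$, then moving $v$ from $A$ to $B$ would change the number of crossing edges by exactly $a - b > 0$: the $a$ edges from $v$ to $A$ now cross, while the $b$ edges to $B$ no longer cross. This would contradict the maximality of the cut $(A,B)$. Hence $b \ge a$, which gives $d_H(v) = b \ge (a+b)/2 = d_G(v)/2$, as required.

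The only potential obstacle is a book-keeping one, namely making sure the comparison of the cut sizes before and after a single vertex swap is correct; all other edges of $G$ are unaffected by moving $v$, so only the $a+b$ edges incident to $v$ contribute to the change, and the calculation above is straightforward. Since the argument applies to every vertex of $G$, the subgraph $H$ is the desired spanning bipartite subgraph, completing the proof. (One can equivalently phrase this as a local-search algorithm: starting from any partition, repeatedly move any vertex with more same-side neighbors than opposite-side neighbors; the cut size strictly increases at each step, so the process terminates at a partition with the claimed property.)
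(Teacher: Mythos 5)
Your proof is correct and is the standard maximum-cut argument; the paper states this lemma as well known and omits the proof entirely, so there is nothing to compare against, but this is exactly the argument the authors implicitly rely on.
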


%\begin{proof}
%Choose a partition $X,Y$ of $V(G)$ which maximises the number of edges from $X$ to $Y$. We obtain the desired subgraph by deleting all the edges within $X$ and $Y$. Any vertex $v$ satisfies the property as otherwise we could move it to the other part and increase the number of edges between the parts.
%\end{proof}

\begin{thm}[K\"ov\'ari-S\'os-Tur\'an \cite{K-S-T}]\label{KST}
Let $s\le t$ be positive integers. Every bipartite graph with parts of size $n$ and at least $(t-1)n^{2-1/s}+sn$ edges has $K_{s,t}$ as a subgraph.
\end{thm}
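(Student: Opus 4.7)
The plan is to run the classical double-counting argument of K\"ov\'ari, S\'os, and Tur\'an. Let $A$ and $B$ denote the two parts of the bipartite graph $H$, each of size $n$, and write $e$ for the number of edges. I would suppose for contradiction that $H$ contains no copy of $K_{s,t}$ and derive an upper bound on $e$ contradicting the hypothesis.

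First I would count pairs $(S,v)$ with $S\subseteq A$ of size $s$ and $v\in B$ adjacent to every vertex of $S$. The no-$K_{s,t}$ assumption gives the upper bound $(t-1)\binom{n}{s}$, since any $s$-subset of $A$ with $t$ common neighbours in $B$ would span a copy of $K_{s,t}$. Counting the same pairs by first choosing $v\in B$ and then an $s$-subset of its neighbourhood in $A$ gives $\sum_{v\in B}\binom{d(v)}{s}$, where $d(v)$ is the degree of $v$. Hence
\[
\sum_{v\in B}\binom{d(v)}{s}\;\le\;(t-1)\binom{n}{s}.
\]

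Next, since $\sum_{v\in B} d(v)=e$, I would apply convexity of $x\mapsto\binom{x}{s}$ (Jensen's inequality, using the polynomial extension for $x\ge s-1$) to obtain $\sum_{v\in B}\binom{d(v)}{s}\ge n\binom{e/n}{s}$. Plugging in the elementary bounds $\binom{x}{s}\ge (x-s+1)^s/s!$ on the left and $\binom{n}{s}\le n^s/s!$ on the right, this simplifies to $(e/n-s+1)^s\le (t-1)\,n^{s-1}$, which on rearranging yields
\[
e\;\le\;(t-1)^{1/s}\,n^{2-1/s}+(s-1)n\;<\;(t-1)\,n^{2-1/s}+sn,
\]
contradicting the hypothesis on the edge count.

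I do not anticipate any real obstacle: the only subtle point is that to apply the convex lower bound $\binom{x}{s}\ge (x-s+1)^s/s!$ one needs $e/n\ge s-1$, which is comfortably ensured by the $+sn$ term in the edge hypothesis (otherwise the statement is vacuous). The rest is direct algebra, and as a by-product one even recovers the slightly sharper bound with $(t-1)^{1/s}$ in place of $(t-1)$, which the paper has loosened to state the cleaner form.
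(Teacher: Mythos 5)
The paper states this theorem as a cited tool and does not prove it, so there is no in-text argument to compare against. Your proof is the classical K\"ov\'ari--S\'os--Tur\'an double-counting argument and it is correct: the one subtlety, which you rightly flag, is that the Jensen step requires the convex extension of $x\mapsto\binom{x}{s}$ that is identically $0$ for $x<s-1$ (the raw polynomial $x(x-1)\cdots(x-s+1)/s!$ is negative and non-convex on parts of $[0,s-1]$), and the hypothesis $e\ge sn$ guarantees $e/n\ge s-1$ so that both the Jensen step and the bound $\binom{e/n}{s}\ge(e/n-s+1)^s/s!$ are legitimate.
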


We now introduce some notation and give an overview of how we analyse $\M(G,p).$

If $G$ has $n$ vertices, then $M \sim \M(G,p)$ will almost surely have roughly $np$ vertices. In terms of edges, any edge $xy$ of $G$ which got both its endpoints coloured blue will become an edge in $M$ between the red vertices that $x$ and $y$ picked (assuming $x$ and $y$ each have red neighbours). This means that, provided we pick $p$ carefully, $M$ will have roughly as many edges as $G$. The main issue is that $M$ might not be a simple graph. In other words, between some vertices of $M$ there could be many parallel edges and the main part of the analysis of the above process is to control the number of such parallel edges. 

Given $M \sim \M(G,p)$, we say that a $3$-path $vxyu$ in $G$ is \emph{activated} if both vertices $v$ and $u$ got coloured red, both vertices $x$ and $y$ got coloured blue and $x$ chose $v$ and $y$ chose $u$ as their red neighbours. Note that a path $vxyu$ being activated means there is an edge between $v$ and $u$ in $M$. With this in mind our general strategy for the analysis of $\M(G,p)$ will be to try to find a collection $\mathcal{P}$ of many $3$-paths in our graph, such that not too many of these paths have the same endvertices. The chance that a fixed $3$-path activates will be rather small, but the chance that two such paths simultaneously activate is even smaller. This means that with the right choice of parameters we still expect to see many activated paths from $\mathcal{P}$ but only very few between the same pairs of vertices, which lets us conclude there are many edges in $M$ but few parallel ones. The key part of the approach is to choose $\mathcal{P}$ correctly, which will depend on the assumptions we make on our graph $G$. The technical part of the approach is mostly contained in the following two lemmas. The first one gives a lower bound on the probability that a single $3$-path activates while the second gives an upper bound on the chance that several paths with same endvertices activate simultaneously.

\begin{lem}\label{lem:activation-lb}
Let $G$ be a graph with $\Delta(G) \le d$. The chance that a path $vxyu$ activates in $\M(G,p)$ is at least $\frac{1}{2^7d^2}$ given $\frac4d \le p\le \frac12$.
\end{lem}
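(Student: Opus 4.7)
The plan is to decompose the activation of the $3$-path $vxyu$ into a coloring stage and a choice stage and treat them separately. Since the four vertices $v,u,x,y$ are distinct, the coloring events ``$v,u$ red and $x,y$ blue'' are independent and contribute exactly $p^2(1-p)^2\ge p^2/4$, using $p\le 1/2$. Conditioned on any coloring of the whole graph in which $v,u$ are red and $x,y$ are blue, the choices made by $x$ and $y$ are independent uniform picks from their respective red-neighbour sets, so the conditional probability that $x$ selects $v$ \emph{and} $y$ selects $u$ is exactly $1/(d_r(x)\,d_r(y))$. This expression is well-defined because $v\in N(x)$ and $u\in N(y)$ are red, forcing $d_r(x),d_r(y)\ge 1$.

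The task therefore reduces to lower-bounding $\E\bigl[1/(d_r(x)d_r(y))\bigr]$, where the expectation is over the random coloring of the vertices other than $v,u,x,y$. The plan is to apply the Chernoff bound of \Cref{lem:chernoff} to the number of red vertices in $N(x)\setminus\{v\}$ and in $N(y)\setminus\{u\}$, each of which is a sum of independent $\mathrm{Ber}(p)$ variables. Because $\Delta(G)\le d$ and $pd\ge 4$, each upper-tail event $d_r(x)\ge 2pd + O(1)$ has probability at most $e^{-pd/3}\le e^{-4/3}$, and a union bound ensures that both $d_r(x)=O(pd)$ and $d_r(y)=O(pd)$ hold simultaneously with probability bounded below by a positive absolute constant. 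Restricting the expectation to this good event then gives $\E\bigl[1/(d_r(x)d_r(y))\bigr]=\Omega\bigl(1/(pd)^2\bigr)$.

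Combining the two stages multiplies the coloring contribution $p^2/4$ by the $\Omega(1/(pd)^2)$ lower bound on the choice stage; the factors of $p^2$ cancel and one is left with an $\Omega(1/d^2)$ bound, and tracking the constants carefully (using $p\le 1/2$ and $pd\ge 4$) comfortably recovers the claimed $1/(2^7 d^2)$. The only mild subtlety is that $d_r(x)$ and $d_r(y)$ need not be independent when $x$ and $y$ share neighbours, but the union-bound step above sidesteps any analysis of their joint distribution. As an equally clean alternative one could appeal to Jensen's inequality for the jointly convex function $(a,b)\mapsto 1/(ab)$, reducing the task to separately upper-bounding $\E[d_r(x)]$ and $\E[d_r(y)]$ by $O(pd)$ via linearity of expectation.
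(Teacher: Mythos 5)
Your argument is correct and follows essentially the same route as the paper: split the activation event into the colouring of $v,x,y,u$ (probability $p^2(1-p)^2$) and the choice stage, invoke the Chernoff bound to show that $d_r(x)$ and $d_r(y)$ are both $O(pd)$ with probability bounded below by an absolute constant, and multiply to obtain the $\Omega(1/d^2)$ bound. The Jensen alternative you mention in passing, lower-bounding $\E\bigl[1/(d_r(x)d_r(y))\bigr]$ by $1/\bigl(\E[d_r(x)]\,\E[d_r(y)]\bigr)$ via joint convexity of $(a,b)\mapsto 1/(ab)$, is a genuine and slightly cleaner shortcut that avoids concentration altogether, but your primary argument is the paper's argument.
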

\begin{proof}
Let us first consider what happens in the colouring stage of our procedure. We say that a colouring is well-behaved (w.r.t. $vxyu$) if $v,u$ got coloured red, $x,y$ got coloured blue and both $d_r(x),d_r(y) \le 2pd+2$. The probability that the right colours got assigned to $v,x,y,u$ is $p^2(1-p)^2$ and given this the probability that $d_r(x) > 2p(d-2)+2$ is by Chernoff bound (\Cref{lem:chernoff}) at most $e^{-p(d-2)/3}\le e^{-pd/4}$ (note that $d \ge \frac4p \ge 8$), since $x$ has at least $d-2$ neighbours which are not yet coloured or are coloured blue. So the probability that a colouring is well-behaved is at least $$p^2(1-p)^2(1-2e^{-pd/4}) \ge p^2/16.$$

Given that we have obtained a well-behaved colouring in the first stage, the probability that $vxyu$ activates is at least $\frac{1}{(2pd+2)^2}\ge \frac{1}{8p^2d^2}$. So putting things together 
$$\P (vxyu \text{ activates}) \ge \frac{1}{8p^2d^2}\cdot \frac{p^2}{16} \ge \frac{1}{2^7d^2}.$$

\vspace{-0.8cm}
\end{proof}

%The following lemma gives us an upper bound on the chance that every path in a collection of paths between the same $2$ vertices activate simultaneously.
\begin{lem}\label{lem:activation-ub}
Let $G$ be a graph, $d' \le \delta(G)$ and $\mathcal{P}$ be a non-empty collection of $3$-paths between vertices $v$ and $u$ of $G$. If the set of internal vertices of all paths in $\mathcal{P}$ has size $m$ and $2^7 m\log m<pd'$ then the chance that all paths in $\mathcal{P}$ simultaneously activate in $\M(G,p)$ is at most $2p^2/(d'p/4)^m$.
\end{lem}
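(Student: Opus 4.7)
My strategy is to decompose the activation event into a coloring phase and a choice phase, apply the Chernoff bound to control the number of red neighbors each internal vertex sees, and then exploit the independence of the blue vertices' random choices. First, observe that each internal vertex $w$ of paths in $\mathcal{P}$ plays, in every path containing it, either the role of first internal vertex (so $w\sim v$ and for activation $w$ must choose $v$) or second internal vertex (in which case $w$ must choose $u$). If some $w$ plays both roles across different paths of $\mathcal{P}$, simultaneous activation has probability $0$ and the claim is trivial, so I may assume each $w$ has a unique target $t_w\in\{v,u\}$.

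Next, define the coloring event $C$: $v,u$ are red and every internal vertex of $\mathcal{P}$ is blue. Then $\P(C)=p^2(1-p)^m\le p^2$. Set $A=\{v,u\}\cup\{\text{internal vertices of }\mathcal{P}\}$, so $|A|=m+2$. Conditional on $C$, the vertices outside $A$ are still independently red with probability $p$. For each internal $w$, let $D_w$ denote its number of red neighbors; $D_w\ge 1$ because $t_w\in A$ is red, and $|N(w)\setminus A|\ge d'-m-1\ge d'/2$ since the hypothesis $pd'>2^7 m\log m$ forces $d'\gg m$.

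The Chernoff step will apply \Cref{lem:chernoff} to the Binomial count of red neighbors of $w$ outside $A$ (at least $d'/2$ independent trials, each with success probability $p$), yielding $\P(D_w<pd'/4\mid C)\le e^{-pd'/16}$. A union bound over the $m$ internal vertices gives $\P(E^c\mid C)\le m\,e^{-pd'/16}$, where $E=\{D_w\ge pd'/4\text{ for all }w\}$. Given any coloring, the blue vertices make their choices independently, so the conditional probability that every internal $w$ chooses $t_w$ equals $\prod_w 1/D_w$. Bounding this product by $(4/(pd'))^m$ on $E$ and by $1$ on $E^c$, I will obtain
\[
\P(\text{all paths activate})\le p^2\Bigl((4/(pd'))^m+m\,e^{-pd'/16}\Bigr).
\]

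The last step is the calibration $m\,e^{-pd'/16}\le (4/(pd'))^m$, which after taking logarithms reduces to $pd'/16\ge m\log(pd'/4)+\log m$. Here the hypothesis $2^7 m\log m<pd'$ does precisely the work required: at the boundary $pd'=2^7m\log m$ the left side equals $8m\log m$ while the right side is only of order $m\log m$, and $pd'/16$ grows linearly while $m\log(pd'/4)$ grows only logarithmically in $pd'$. This calibration is the main obstacle of the argument --- it is the only place where the precise hypothesis is used and it is what forces the specific constants. Once it is verified, combining the bounds above yields the advertised $2p^2/(pd'/4)^m$.
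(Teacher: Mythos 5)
Your proof is correct and follows essentially the same route as the paper: you first reduce to the case where each internal vertex has a unique target in $\{v,u\}$ (the paper's $X\cap Y=\emptyset$), condition on the colouring of $v,u$ and the internal vertices, apply Chernoff plus a union bound to get the well-behaved event where every internal vertex has $\ge pd'/4$ red neighbours, and bound the choice probability by $(4/(pd'))^m$ on that event and $1$ off it. The only cosmetic difference is in the calibration: the paper bounds the error term $m e^{-pd'/16}$ by $1/(d'p)^m$ and then sums $4^m/(d'p)^m + m/(d'p)^m \le 2\cdot 4^m/(d'p)^m$, whereas you bound it directly by $(4/(pd'))^m$; both are justified by the same hypothesis $2^7 m\log m < pd'$.
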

\begin{proof}
Let us denote the paths in $\mathcal{P}$ by $vx_iy_iu$. Let $X$ denote the set of $x_i$'s and $Y$ the set of $y_i$'s. The condition of the lemma tells us that $|X \cup Y|=m$. Notice that all paths in $vx_iy_iu$ simultaneously activate only if $v,u$ get coloured red, each vertex in $X\cup Y$ gets coloured blue, all vertices in $X$ choose $v$ as their red neighbour and all vertices in $Y$ choose $u$ as their red neighbour. In particular, unless $X \cap Y = \emptyset$ the probability of simultaneous activation is $0$. So we may assume $X \cap Y = \emptyset$.

We say that a colouring is feasible if $v,u$ get coloured red and all vertices in $X \cup Y$ get coloured blue. We say it is well-behaved if in addition all vertices in $X \cup Y$ have at least $(d'-m)p/2$ red neighbours. The probability that a colouring is feasible is $p^2(1-p)^m$. Given that a colouring is feasible the probability that $d_r(v) < p(d'-m)/2$ for a vertex $v\in X \cup Y$ is by Chernoff bound (\Cref{lem:chernoff}) at most $e^{-p(d'-m)/8}\le e^{-pd'/16},$ since $v$ has at least $d'-m$ neighbours for which we still don't know the colour or we know are red. So the probability that a colouring is not well-behaved given that it is feasible is by a union bound at most $$me^{-pd'/16} \le \frac{m}{(d'p)^m},$$ which follows since $2^7 m\log m<pd'$ implies $pd'/\log (pd') > 16m$, using the fact that $m \ge 2$.

Given a well-behaved colouring the probability that each vertex in $X \cup Y$ chooses the right red neighbour is at most $\left(\frac{4}{d'p}\right)^m$ since $(d'-m)p/2 \ge d'p/4.$ 

The chance that all paths in $\mathcal{P}$ activate given that the colouring is feasible is at most probability that all paths in $\mathcal{P}$ activate given that the colouring is well-behaved plus the probability that the colouring is not well-behaved given it is feasible. By the above bounds, this probability is at most
$$ \frac{4^m}{(d'p)^m}+\frac{m}{(d'p)^m}\le \frac{2^{2m+1}}{(d'p)^m}.$$
Finally, this implies that the chance that $\mathcal{P}$ activates is at most
$\frac{2^{2m+1}}{(d'p)^m}\cdot p^2(1-p)^m \le \frac{2^{2m+1}p^2}{(d'p)^m}.$
\end{proof} \vspace{-0.3cm}

\subsection{Minors in \texorpdfstring{$K_{s,t}$}{K s,t}-free graphs}
In this subsection we illustrate our approach by giving a simpler proof of a result of K\"uhn and Osthus \cite{K-O} on dense minors in $K_{s,t}$-free graphs $G$.

A graph $G$ is said to be \textit{almost regular} if $\Delta(G) \le 2\delta(G)$.

\begin{thm}
For $2\le s \le t$ there is a constant $c=c(s,t)>0$ such that every $K_{s,t}$-free, almost regular graph $G$ with average degree $d$ has a minor of average degree at least $cd^{1+\frac{2}{s-1}}.$ 
\end{thm}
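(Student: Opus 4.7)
The plan is to apply the random-minor procedure $\M(G,p)$ with $p := c_1 d^{-2/(s-1)}$ for a suitable constant $c_1 = c_1(s,t)$, and show that the resulting simple graph has expected $\Theta(np)$ vertices and $\Omega(nd)$ distinct edges, so that its average degree is of order $d/p = d^{1 + 2/(s-1)}$. By \Cref{lem:maxcut} I first pass to a spanning bipartite subgraph retaining at least half of each vertex's degree, so without loss of generality $G$ is bipartite, still $K_{s,t}$-free, and almost regular with $\delta, \Delta = \Theta(d)$. The constant $c_1$ is chosen large enough to ensure $p \ge 4/d$ (so \Cref{lem:activation-lb} applies) and, for $s \ge 3$, that $pd$ is large enough for the threshold $2^7 m \log m < pd$ of \Cref{lem:activation-ub} to be satisfied for $m \in \{3,4\}$. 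The boundary case $s = 2$, where $d^{-2/(s-1)} = d^{-2} < 4/d$, will need a separate direct argument exploiting the bounded-codegree structure of $K_{2,t}$-free graphs.

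Let $\mathcal{P}$ be the collection of all $3$-paths in $G$; by almost regularity $|\mathcal{P}| = \Theta(nd^3)$. For any pair of vertices $(v,u)$, the number $P_{vu}$ of $3$-paths between $v$ and $u$ equals $e(G[N(v),N(u)])$, and since $G[N(v),N(u)]$ is itself $K_{s,t}$-free bipartite with both parts of size $O(d)$, \Cref{KST} yields $P_{vu} = O(d^{2 - 1/s})$. By \Cref{lem:activation-lb} each path in $\mathcal{P}$ activates with probability at least $1/(2^7 d^2)$, so the expected number of activated paths (counted with multiplicity) is $\Omega(nd)$. Writing $X_{vu}$ for the number of activated paths between $v$ and $u$, the number of distinct edges in $\M$ is bounded below by $\sum_{vu} X_{vu} - \sum_{vu} \binom{X_{vu}}{2}$ via Bonferroni; taking expectations, the first term is $\Omega(nd)$, while the second is the expected number of unordered pairs of distinct $3$-paths sharing endpoints that both activate, and by \Cref{lem:activation-ub} each such pair contributes at most $2p^2/(dp/4)^m$ with $m \in \{3,4\}$ internal vertices.

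The main obstacle is bounding this parallel-pair contribution --- in particular the sum $\sum_{vu} P_{vu}^2$ --- tightly enough. The naive bound $\sum_{vu} P_{vu}^2 \le (\max_{vu} P_{vu})\sum_{vu} P_{vu} = O(nd^{5 - 1/s})$ only yields average degree of order $d^{1 + 1/(2s)}$, strictly weaker than the target $d^{1 + 2/(s-1)}$. To sharpen it I would expand $\sum_{vu} P_{vu}^2$ as the count of $6$-tuples $(v,u,x_1,y_1,x_2,y_2)$ with the six edges $vx_1, vx_2, x_1y_1, x_2y_2, uy_1, uy_2$ all present, and rewrite it as $\sum_{x_1y_1,\, x_2y_2 \in E(G)} |N(x_1)\cap N(x_2)|\cdot|N(y_1)\cap N(y_2)|$. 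The $K_{s,t}$-free condition bounds these pairwise codegrees only at the $s$-fold level, and the efficient route is to apply \Cref{KST} iteratively to the auxiliary bipartite graph whose edges record pairs of vertices on one side with large codegree into the other; this should yield a bound of order $nd^5 p^2$ for the $m = 4$ contribution, with an analogous estimate for $m = 3$ pairs of $3$-paths sharing an internal vertex. Plugging back, the Bonferroni estimate gives $\E[|E(\M)|] = \Omega(nd)$, which combined with $|V(\M)| = O(np)$ (by \Cref{lem:chernoff}) delivers a minor of average degree $\Omega(d^{1 + 2/(s-1)})$, as required.
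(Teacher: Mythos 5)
Your overall framework (pass to a bipartite almost-regular subgraph, run $\M(G,p)$, count activated $3$-paths, discount parallel edges) matches the paper, but the analysis of parallel edges has a genuine gap. You write $\mathbb{1}[X_{vu}\ge 1]\ge X_{vu}-\binom{X_{vu}}{2}$ and then want to subtract the expected number of co-activated \emph{pairs} of $3$-paths with the same endpoints. The trouble is the pairs sharing one internal vertex (``cherries'' $v$--$x$--$\{y_1,y_2\}$--$u$). Their co-activation probability is roughly $p^2/(dp)^3$, but their \emph{number} is governed by pairwise codegrees $|N(y_1)\cap N(y_2)|$, which $K_{s,t}$-freeness does \emph{not} control for $s\ge 3$ — these codegrees can be as large as $\Theta(d)$, making the cherry count as large as $\Theta(nd^4)$ and the subtraction term as large as $\Theta(nd/p)\gg nd$ for any reasonable $p$. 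Thus the Bonferroni subtraction swamps the main term. The paper avoids this by not trying to eliminate \emph{all} parallel pairs: it deletes one middle edge per activated $6$-cycle and one per activated $K_{1,s}$-star, then observes that what remains of the middle edges between any fixed $(v,u)$ forms a star of size at most $s-1$ (the $6$-cycle deletions kill independent pairs, and the star deletions cap the star size; the bound $s-1$ rests on the fact that a $K_{1,s}$ in $G[N(v),N(u)]$, together with $u$ or $v$, yields a $K_{2,s}$, and $K_{s,t}$-free graphs contain only $O(ntd^s)$ copies of $K_{2,s}$). Accepting up to $s-1$ parallel edges and dividing by $s$ at the end is strictly weaker than requiring no repeats, and that slack is exactly what makes the argument close.

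Two further points. First, when bounding $P_{vu}$ you apply \Cref{KST} to $G[N(v),N(u)]$ using only that it is $K_{s,t}$-free, giving $O(d^{2-1/s})$; the paper's sharper observation is that in the bipartite subgraph $G'$ this induced bipartite graph is in fact $K_{s-1,t}$-free (any $K_{s-1,t}$ extends by $v$ or $u$ to a $K_{s,t}$), yielding $O(td^{2-1/(s-1)})$ and hence the exponent $1/(s-1)$ rather than $1/s$ in the $6$-cycle count. Second, the parameter choice $p=c_1d^{-2/(s-1)}$ is too aggressive even for the disjoint-pair term: with the paper's $6$-cycle count $O(tnd^{5-1/(s-1)})$ and co-activation probability $O(p^{-2}d^{-4})$, your $p$ gives an expected co-activated $6$-cycle count of order $tnd^{1+3/(s-1)}\gg nd$. (The proof in the paper takes $p=2^{12}\sqrt{t}\,d^{-1/(2(s-1))}$ and concludes with average degree of order $d^{1+\frac{1}{2(s-1)}}$; the exponent $1+\frac{2}{s-1}$ in the statement appears to be a typo, as one can check for $s=3$ by noting that a $K_{3,t}$-free almost-regular graph can have $n=\Theta(d^{3/2})$ vertices, so no minor can have average degree $\Omega(d^2)>n$.) Finally, the sketch ``apply KST iteratively to the auxiliary bipartite graph whose edges record pairs with large codegree'' to bound $\sum_{vu}P_{vu}^2$ is not substantiated and, given the codegree obstruction above, would need precisely the $K_{2,s}$-counting idea from the paper to succeed.
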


\begin{proof}
We choose $c=c(s,t)=\frac{1}{2^{56}st^2}.$ If $cd^{2/(s-1)} \le 1$ then $G$ itself provides us with the desired minor. So we may assume that $d^{2/(s-1)} \ge c^{-1}=2^{56}st^2$. While we will work with the above explicit value of $c$, for the sake of clarity a reader might assume throughout the argument that $d$ is sufficiently large compared to $s$ and $t$.
We have $d \le \Delta(G) \le 2\delta(G) \le 2d$ which in particular gives $\delta(G) \ge d/2$ and $\Delta(G) \le 2d$. By \Cref{lem:maxcut} there is a spanning bipartite subgraph $G'$ of $G$ which has $d(G') \ge d(G)/2=d/2$ and $\delta(G') \ge \delta(G)/2\ge d/4.$ 

Let us first observe a few easy counts, all but the first of which follow due to the fact $G'$ is $K_{s,t}$-free.
\begin{enumerate}
    \item The number of $3$-paths in $G'$ is at most $2nd^3$, where $n=|G'|$.\\
    This follows since the number of edges in $G'$ is at most $nd/2$ and for any choice of an edge as a middle edge of a $3$-path we have at most $4d^2$ choices for its endvertices. 
    \item The number of cycles of length $6$ is at most $\frac43tnd^{5-1/(s-1)}$.\\
    Note that as each path of length $3$, of which there are at most $2nd^3$, completes into a $6$-cycle in at most $t(2d)^{2-1/(s-1)}$ many ways (and we counted each cycle $6$ times). This follows since given a $3$-path from $v$ to $u$ each such cycle corresponds to an edge between neighbourhoods of $v$ and $u$ both of which have size at most $2d$. The subgraph induced by these neighbourhoods is bipartite and $K_{s-1,t}$-free since any $K_{s-1,t}$ together with $v$ or $u$ would constitute a copy of $K_{s,t}$ in $G$. The claimed bound now follows by K\"ov\'ari-S\'os-Tur\'an theorem (\Cref{KST}).

    \item The number of copies of $K_{2,s}$ is at most $ntd^{s}$.\\
    This time we count, for every vertex $v$ the number of stars $K_{1,s}$ with centre in the second neighbourhood $N_2$ and all leaves in the first neighbourhood $N_1$ of $v$. Let $k=|N_2|$ and $d_1,\ldots, d_k$ denote the number of neighbours of vertices in $N_2$ within $N_1$. The number of our stars is equal to $\sum \binom{d_i}{s} \le t\binom{d(v)}{s} \le t\binom{2d}{s}\le 2td^s$ as otherwise we get a $K_{s,t}$ in $G$. Taking the sum over all vertices and noticing we count each $K_{2,s}$ twice we get the claimed bound.
    %The number of $s-1$-fans is at most $\sum d_i^{s-1}$ and we know by power mean inequality (or Holder if you prefer) that $(\sum d_i^{s-1}/k)^{1/(s-1)} \le (\sum d_i^s/k)^{1/s}$ which gives $\sum d_i^{s-1} \le k^{1/s}d^{s-1}\le d^{s-1+2/s}.$ 
\end{enumerate}

Let us consider $M \sim \M(G',p)$ with $p=2^{12}\sqrt{t}d^{-\frac1{2(s-1)}}\le2^{12}\sqrt{t}c^{\frac14}\le \frac14$. Let $X$ denote the expected number of activated $3$-paths in $M$. Since $G'$ has at least $nd/4$ edges and each contributes a distinct activated $3$-path, provided its endpoints are blue and have a red neighbour, we deduce $$\E X \ge (1-p)^2(1-2(1-p)^{d/4-1})nd/4\ge (1-p)^2(1-2e^{-pd/8})nd/4 \ge nd/8.$$ 

We say a $6$-cycle in $G'$ activates if it consists of two edge-disjoint activated $3$-paths. Let $Y$ count the  activated $6$-cycles. The chance for two edge-disjoint $3$-paths with endvertices $v,u$ to simultaneously activate is at most $2p^2/(dp/16)^4=\frac{2^{17}}{p^2d^4}$, by \Cref{lem:activation-ub} (with $m=4$ and $d'=d/4$), which applies since $2^{12} \le pd$. Each cycle has three possible pairs of such paths so the chance that a $6$-cycle activates is at most $3\cdot \frac{2^{17}}{p^2d^4}$. In particular, $$\E Y \le 2^{19}tnd^{1-1/(s-1)}/p^2.$$ 

Given a star $K_{1,s}$ that appears between neighbourhoods of vertices $v,u \in G'$, we say it is activated for $v$ and $u$ if all $3$-paths between $v$ and $u$ through an edge of the star activated. Let $Z$ count the number of triples consisting of such a \text{star} and vertices $v,u$ such that the star was activated for $v,u$. Each such triple corresponds to a $K_{2,s}$ with a vertex appended to one of its left vertices. In particular, there are at most $4ntd^{s+1}$ plausible triples each of which activates with probability at most  $2p^{2}/(dp/16)^{s+1}=2^{4s+5}p^{1-s}d^{-s-1}$, by \Cref{lem:activation-ub} (with $m=s+1$ and $d'=d/4$). Here we require $2^9(s+1)\log (s+1) \le pd$ in order to be able to apply the lemma, which holds since $pd \ge 2^{12}\sqrt{d}$ and $d$ is large enough compared to $s$. In particular we get $$\E Z \le 2^{4s+7}ntp^{1-s}.$$

For every activated $6$-cycle we delete a middle edge of an activated $3$-path on the cycle, in total deleting at most $Y$ edges. For every activated star we delete one of its edges, in total at most $Z$ edges get deleted. So we are left with at least $X-Y-Z$ edges in $M$ so we know that the expected number of remaining edges is at least 
\begin{align*}
nd/8-2^{19}tnd^{1-1/(s-1)}/p^2-2^{4s+7}ntp^{1-s}&=nd/8-nd/32-2^{4s+7}ntp^{1-s}\\
&\ge nd/8-nd/32-nd/32 = nd/16,
\end{align*}
where the inequality is equivalent to $d \ge 2^{4s+12}tp^{1-s}$ which after plugging in our choice of $p$ is equivalent to $d \ge t^{3-s}2^{48-16s}$ which holds by our choice of $c$.

\textbf{Claim.} After this process, between any two vertices $v,u\in M$ there are at most $s-1$ parallel edges. 
\begin{proof}
To see why this is true consider the bipartite subgraph with left part consisting of blue neighbours of $v$ which choose $v$ as their red neighbour and the right part similarly consists of blue neighbours of $u$ which picked $u$ as their red neighbour. We let the edge set consist of edges of $G'$ which were a middle edge of an activated $3$-path from $v$ to $u$. There are no two independent edges in this graph as otherwise we would have an activated $6$-cycle in which we did not delete an edge. This means that this graph is a star. If the star had size $s$ or more we would get an activated star for which we have not deleted an edge. So this graph is a star with at most $s-1$ edges.
\end{proof} 

This means that, after deleting all but one parallel edge between each pair of vertices, we are left with a simple graph with the expected number of edges at least $\frac{nd}{16s}$. Since we want to show this minor has large average degree, what remains to be done is to control the number of vertices in $M$.
By the Chernoff bound (\Cref{lem:chernoff}), $M$ has more than $2pn$ vertices with probability at most $e^{-np/3}$ and each such outcome can contribute at most $n^2$ edges to our expectation. So we can find an outcome with at most $2pn$ vertices and at least $\frac{nd}{16s}-n^2e^{-np/3}$ edges. This gives us a minor of average degree at least $$\frac{d}{32sp}-np^{-1}e^{-np/3}\ge d^{1+\frac{1}{2(s-1)}}/(2^{17}s\sqrt{t})-p^{-2} \ge cd^{1+\frac{1}{2(s-1)}}$$ 
where in the first inequality we used $np e^{-np/3}\le 1$ since $np \ge dp \ge 12.$
\end{proof} 

\textbf{Remark:} 
The above theorem holds also without the almost regularity condition as shown by \cite{B-M}. The approach presented above with some additional ideas can be used to obtain an alternative, slightly simpler proof of this result. However, since this is a known result and the purpose of including the above argument was mainly for illustration, we chose not to include the details. 
%We can get rid of the almost-regularity condition at the expense of losing some log factors in the density of the minor we get, by using Plotkin, Rao and Smith \cite{PRS} to show we can assume graph has $n \le d^5$ and then use a regularisation lemma (e.g. Lemma 2.1 in \cite{increasing-paths}) to pass to an almost regular subgraph with paying a single $\log d$ factor only, which beats the result of K\"uhn and Osthus who use a weaker regularisation lemma. Benny and Michael \cite{B-M} proved this result without almost regularity or extra logs, but it is relatively complicated so it would be really nice if we could remove the almost regularity condition in the above argument as that would give us a simpler proof and clearly show the argument is actually different from that of K\"uhn and Osthus. We can also use \Cref{thm:dense-to-clique} to pass from dense to clique minor.

\section{\texorpdfstring{$K_{s}$}{Ks}-minor-free case}

In this section, we prove our main result, \Cref{thm:main}. We state a slightly stronger version below that is more convenient to work with.

\begin{thm}\label{thm:main-r}
Let $s\ge 3$ be an integer, $\eps < \frac1{10(s-2)}$ and $d$ be large enough. Every $K_s$-free graph $G$ without $K_d$ as a minor has $\alpha(G) \ge \frac n{d^{1-\eps}}$.
\end{thm}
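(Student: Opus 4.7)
The plan is to argue by contradiction. Assume $G$ is a $K_s$-free graph on $n$ vertices with $\alpha(G) < r := n/d^{1-\eps}$ and no $K_d$ minor; I will exhibit such a minor using the random construction $\M(G',p)$ of a well-chosen subgraph $G' \subseteq G$ with carefully tuned $p$, giving a contradiction. Preprocessing: since $\chi(G) \ge n/\alpha(G) > d^{1-\eps}$, extract a subgraph $G'$ with $\delta(G') \ge d^{1-\eps}-1$; $G'$ inherits $K_s$-freeness and $\alpha(G') \le r$. Ramsey applied to each neighbourhood (which is $K_{s-1}$-free with $\alpha \le r$) gives $\Delta(G') \le R(s-1,r+1) = O(r^{s-2})$, so after further refinement we may assume $G'$ is almost regular, and via \Cref{lem:maxcut} bipartite at the cost of halving degrees.

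Sample $M \sim \M(G',p)$ with $p \approx \delta^{-\gamma}$ for $\gamma$ tuned to $\eps$ and $s$. By \Cref{lem:activation-lb} each edge of $G'$ activates with probability $\Omega(\Delta^{-2})$, so the expected number of multi-edges in $M$ is $\Omega(n\delta)$, while $|V(M)| \le 2np$ by \Cref{lem:chernoff}. Crucially, contracting stars cannot destroy non-adjacency: any independent set of stars in $M$ projects to an independent set of centres in $G'$, so $\alpha(M) \le \alpha(G') \le r$. The plan is to delete parallel edges to obtain a simple minor $M'$ of $G$ whose density together with its $\alpha$-bound forces a $K_d$ minor.

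The crux is controlling parallel edges. Multi-edges between red $v,u$ correspond to edges of $G'$ between the stars $S_v, S_u$, i.e.\ to 3-paths $v$-$x$-$y$-$u$ in $G'$ that are simultaneously activated. By \Cref{lem:activation-ub}, $m$ internally-disjoint 3-paths between $v,u$ activate jointly with probability at most $2p^2/(\delta p/4)^m$, decaying geometrically once $\delta p$ is moderately large. Counting such $m$-tuples combinatorially in $G'$ uses both hypotheses: the common neighbourhood $N(v)\cap N(u)$ is $K_{s-2}$-free with independence number $\le r$ when $vu \in E$ (any $K_{s-2}$ there together with $v,u$ would yield $K_s$), so by Ramsey $|N(v)\cap N(u)| = O(r^{s-3})$; more generally the bipartite graph between $N(v)$ and $N(u)$ in $G'$ is tightly restricted by analogous Ramsey bounds. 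For an optimal $m$ on the order of $10(s-2)$, summing a union bound weighted by the count of $m$-tuples makes the expected number of pairs with $\ge m$ activated 3-paths negligible compared to $n\delta$.

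After deleting at most one edge per such bad configuration, $M'$ retains $\Omega(n\delta)$ edges on $\le 2np$ vertices and satisfies $\alpha(M') \le r$; the resulting average degree $\gg d$ combined with the $\alpha$-bound should yield the desired $K_d$ minor, possibly through an iterative sharpening of the construction. The main obstacle is the combinatorial enumeration of multi-3-path configurations in $K_s$-free graphs with $\alpha \le r$: I anticipate an inductive scheme on $s$ (viewing each $N(v)$ as a $K_{s-1}$-free graph with $\alpha \le r$ and recursing), with the $(s-2)$ factor in the exponent $\tfrac{1}{10(s-2)}$ mirroring the depth of the recursion. A secondary hurdle is extracting the final $K_d$ minor (rather than merely $K_{d/(\log d)^c}$) from $M'$; this likely requires using the $\alpha$-bound in a more refined way than a direct Kostochka--Thomason application.
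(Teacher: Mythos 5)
Your high-level outline (random minor $\M(G,p)$, activation lemmas, Ramsey structure of $K_{s-1}$-free neighbourhoods) points in the right direction, and the observation that $\alpha(M)\le\alpha(G')$ is correct, but the central combinatorial step — controlling parallel edges — has a genuine gap, and it is exactly the gap the paper's proof is built to fill.

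You propose to bound the number of simultaneously activated $3$-paths between fixed $v,u$ via Ramsey bounds on $N(v)\cap N(u)$ (and ``analogous'' bounds on the bipartite graph between $N(v)$ and $N(u)$). This does not work: a $3$-path $vxyu$ has $x\in N(v)$ and $y\in N(u)$, not $x,y\in N(v)\cap N(u)$, so $|N(v)\cap N(u)|$ is irrelevant, and $K_s$-freeness on its own gives no useful upper bound on the number of edges of $G'$ between $N(v)$ and $N(u)$ — that bipartite graph can easily have $\Theta(d^2)$ edges and a near-perfect matching, producing $\Theta(d^2)$ edge-disjoint $3$-paths between $v$ and $u$. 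The paper explicitly flags this (``when working with $K_s$-free graphs, we lack a good way of bounding the number of $3$-paths between an arbitrary pair of vertices''), and its actual fix is to \emph{not} count all $3$-paths but instead construct a tailored family $\mathcal{P}$ of $3$-paths starting at $v$: cover $N(v)$ by independent sets $S_1,\dots,S_t$, prune edges so the graph between each $S_i$ and $N_i$ is a disjoint union of stars, then cover each $N(w)\cap N_i$ by further independent sets $S_{w,j}$ and mark one ``permissible'' edge per target vertex. This two-level cleaning forces the bipartite graph $B$ of middle edges of $\mathcal{P}_{vu}$ to have $\Delta(B)\le 4d^{1-1/(s-2)}$ (Claim 3.8), which is what makes the inclusion-exclusion/second-moment calculation in Claim 3.9 close. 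Your sketch never constructs such a restricted family, and ``an inductive scheme on $s$'' stated at this level of vagueness does not substitute for it.

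Two secondary issues. First, your degree control $\Delta(G')\le R(s-1,r+1)=O(r^{s-2})$ is only useful when $n$ is small relative to $d$; in general $r=n/d^{1-\eps}$ can be enormous, so this bound says nothing. The paper instead bounds $\Delta$ externally (only $\le n/2$ vertices can have degree above $2d\sqrt{\log d}$ once Kostochka--Thomason is ruled out) and handles the minimum-degree/expansion issue via the clean induction in Corollary 3.4 rather than by passing to a high-$\chi$ subgraph. Second, your ending is speculative: you concede that Kostochka--Thomason may only yield $K_{d/\mathrm{polylog}}$ and defer the fix to ``a more refined use of the $\alpha$-bound.'' The paper avoids this entirely by choosing $p$ so that the simple minor has average degree $\gg d\sqrt{\log d}$, at which point Theorem 3.3's application of Kostochka--Thomason gives $K_d$ outright; no refinement is needed. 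As written, your proposal is not a proof but a plan with the hardest step (and a clean exit) missing.
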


To see why this implies \Cref{thm:main} choose $d=(n/\alpha(G))^{1+\frac{1}{10(s-2)}}$. \Cref{thm:main-r} implies that unless $G$ has a $K_d$ as a minor we must have $d \ge (n/\alpha(G))^{\frac{1}{1-\eps}}$ which is impossible since $\frac{1}{1-\eps}> 1+\frac{1}{10(s-2)}$, and completes the argument. Note in particular that one may take the power of $n/\alpha(G)$ in \Cref{thm:main} to be anything smaller than $\frac{1}{1-\eps}$ so smaller than $\frac{1}{1-\frac{1}{10(s-2)}}=1+\frac{1}{10(s-2)-1}$ and we choose the slightly weaker statement for \Cref{thm:main} for clarity.

In this setting we will need to take a slightly different approach when analysing $\M(G,p)$. The reason is that when working with $K_s$-free graphs, we lack a good way of bounding the number of $3$-paths between an arbitrary pair of vertices, which we previously obtained using the fact our graph was $K_{s,t}$-free. In the present setting we will use the fact that $\alpha(G)$ is bounded to show that independent sets must expand well. Additionally, being $K_s$-free allows us to show that we can cover at least a half of any collection of vertices using few independent sets. With these two facts, we fix a vertex, consider a carefully selected collection of $3$-paths starting at this vertex, and then repeat a similar argument as in the previous section to show that we expect to see many non-parallel edges incident to this vertex in $M$. The following standard Ramsey lemma quantifies the second fact mentioned above. We prove it for completeness. 

% \begin{thm}[Tur\'an, for a proof see \cite{alon-spencer}]\label{thm:turan}
% A graph with $n$ vertices and $\Delta(G)=\Delta$ has $\alpha(G) \ge \frac{n}{\Delta+1}$.
% \end{thm}

%The following is a standard Ramsey type lemma, which we prove for completeness. 
%(we could mention Bohman Keevash and Ajtai Komlos Szemeredi).
\begin{lem}\label{lem:ramsey-asym}
Let $s \ge 2$. It is possible to cover half of the vertices of any $n$-vertex $K_s$-free graph with at most $4n^{1-\frac{1}{(s-1)}}$ independent sets.
\end{lem}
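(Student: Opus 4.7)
The plan is to prove the claim by a simple iterative peeling argument, fueled by the classical Ramsey-type estimate that every $K_s$-free graph on $m$ vertices has independence number at least $m^{1/(s-1)}$. Given this estimate, I would repeatedly pull off large independent sets until half of $V(G)$ is covered, and then count how many sets were needed.

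To establish the Ramsey estimate $\alpha(G) \ge m^{1/(s-1)}$ for every $K_s$-free graph $G$ on $m$ vertices, I would use induction on $s$. The base case $s=2$ is immediate since the whole vertex set is independent. For the inductive step, split into two cases depending on whether $G$ has a vertex of large degree. If some vertex $v$ satisfies $d(v) \ge m^{1-1/(s-1)}$, then $G[N(v)]$ is a $K_{s-1}$-free graph on at least $m^{(s-2)/(s-1)}$ vertices, and the inductive hypothesis yields an independent set of size at least $d(v)^{1/(s-2)} \ge m^{1/(s-1)}$ inside $N(v)$, which is independent in $G$ as well. Otherwise every vertex has degree strictly less than $m^{1-1/(s-1)}$, and the greedy procedure that iteratively selects a vertex and deletes its closed neighborhood produces an independent set of size at least $\lceil m^{1/(s-1)} \rceil$, since after picking any $k < m^{1/(s-1)}$ vertices one has removed fewer than $k \cdot m^{1-1/(s-1)} < m$ vertices in total, so the procedure can continue.

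With the Ramsey estimate in hand, the outer iteration is straightforward. Starting from $V_0 = V(G)$, while $|V_i| \ge n/2$ I would apply the estimate to the (still $K_s$-free) induced subgraph $G[V_i]$ to extract an independent set $I_i \subseteq V_i$ of size at least $|V_i|^{1/(s-1)} \ge (n/2)^{1/(s-1)}$, and set $V_{i+1} = V_i \setminus I_i$. Each step removes at least $(n/2)^{1/(s-1)}$ vertices from the uncovered portion, so the process terminates within $\lceil (n/2)^{1-1/(s-1)} \rceil$ iterations, having covered at least $n/2$ vertices with the collection of $I_i$'s. Since $(n/2)^{1-1/(s-1)} \le n^{1-1/(s-1)}$ and $n \ge 1$, this count is comfortably at most $4 n^{1-1/(s-1)}$.

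There is no genuine obstacle in this argument; the only care needed is the constant bookkeeping around the ceilings and the factor of $1/2$ inside $(n/2)^{1-1/(s-1)}$, both of which are easily absorbed into the generous prefactor $4$ in the statement.
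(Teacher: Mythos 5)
Your approach matches the paper's: prove an inductive Ramsey-type lower bound on the independence number of a $K_s$-free graph, then repeatedly peel off large independent sets until half the vertices are covered. The outer peeling argument is fine. However, the Ramsey estimate you assert, that every $K_s$-free graph on $m$ vertices has $\alpha(G) \ge m^{1/(s-1)}$, is false as stated: the $5$-cycle $C_5$ is $K_3$-free on $m=5$ vertices with $\alpha(C_5)=2 < \sqrt{5}$, so your claimed bound $\lceil m^{1/(s-1)}\rceil = 3$ fails. The error is in the greedy case of your induction. Each greedy step removes the chosen vertex \emph{and} its neighbors, i.e.\ $d(v)+1$ vertices, and since degrees only satisfy $d(v) < m^{1-1/(s-1)}$ you can only bound this by $\lceil m^{1-1/(s-1)}\rceil$, not by $m^{1-1/(s-1)}$; you therefore cannot run the greedy for $\lceil m^{1/(s-1)}\rceil$ steps. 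What the greedy (equivalently $\alpha(G)\ge m/(\Delta(G)+1)$) actually yields in this case is $\alpha(G)\ge m/(m^{1-1/(s-1)}+1)\ge m^{1/(s-1)}/2$, which is exactly the weaker estimate the paper proves. This weaker bound still makes your peeling argument go through: each step removes at least $(n/2)^{1/(s-1)}/2\ge n^{1/(s-1)}/4$ vertices from the uncovered part, so at most $4n^{1-1/(s-1)}$ sets are needed. To fix the write-up you should weaken the intermediate claim to $\alpha(G)\ge m^{1/(s-1)}/2$ and correct the greedy accounting to include the $+1$ for the closed neighborhood.
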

\begin{proof}
We will show that every $K_s$-free graph with $m$ vertices contains an independent set of size at least $m^{\frac1{s-1}}/2$. It follows that, as long as we have at least $n/2$ vertices left, we can find an independent set of size at least $n^{1/(s-1)}/4$ and remove it from the graph. Once we stop we removed at most $4n^{1-\frac{1}{(s-1)}}$ independent sets which cover at least half of the graph, as desired. 

We prove the above claim by induction on $s$. For $s=2$ the graph being $K_2$-free means there are no edges so there is an independent set of size $m$ as desired. Assume now that $s \ge 3$ and that the lemma holds for $s-1.$
If there is a vertex which has degree at least $m^{\frac{s-2}{s-1}}$ then, since we know its neighbourhood is $K_{s-1}$-free, we are done by induction. On the other hand, if all vertices have degree less than $m^{\frac{s-2}{s-1}}$, then by Tur\'an's theorem (see \cite{alon-spencer}) we know that there is an independent set of size $\frac m{m^{\frac{s-2}{s-1}}+1}\ge m^{\frac1{s-1}}/2$.
\end{proof} \vspace{-0.3cm}

We will also make use of the result of Kostochka \cite{kost} and Thomason \cite{thom} mentioned in the introduction, which lets one pass from dense minors to clique minors.

\begin{thm}\label{thm:dense-to-clique}
Every graph with average degree at least $(3+o(1))t \sqrt{\log t}$ has $K_t$ as a minor.
\end{thm}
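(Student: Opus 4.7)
The plan is to prove this classical Kostochka--Thomason bound via a two-step reduction: first pass to a structured (high-minimum-degree) subgraph, and then construct the branch sets of a $K_t$ minor by a randomised procedure very much in the spirit of the star-contraction scheme used elsewhere in this paper. For the first reduction, iteratively delete any vertex whose current degree is less than half the current average degree; since each deletion strictly increases the edge-to-vertex ratio, the process terminates at a non-empty subgraph $H$ with minimum degree at least $(3/2+o(1))t\sqrt{\log t}$. It therefore suffices to find $K_t$ as a minor in $H$.

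For the main step I would construct $t$ connected vertex-disjoint branch sets $V_1,\dots,V_t$ with at least one edge between every pair, as follows. Take a random ``red'' set $R\subseteq V(H)$ of density $p$ and let every non-red vertex attempt to join a random red neighbour, yielding star clusters $C_r$ for $r\in R$ (discarding the isolated blue vertices and resolving conflicts by the alteration method already used in \Cref{lem:activation-lb} and \Cref{lem:activation-ub}). With $p$ tuned so that the cluster size is $k=\Theta(\sqrt{\log t})$ and $|R|\gg t$, one uses Chernoff (\Cref{lem:chernoff}) to show that most red vertices get a cluster of the expected size, and that for any two red vertices $r,r'$, conditional on $|C_r|,|C_{r'}|\ge k/2$, the probability that no edge of $H$ joins $C_r$ to $C_{r'}$ is of the form $(1-q)^{\Theta(k^2)}$ where $q$ is a local density parameter. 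The quantitative trade-off is: we need this last probability to be at most $t^{-(2+\Omega(1))}$ so that a union bound over $\binom{|R|}{2}$ pairs still leaves $t$ red vertices whose clusters are pairwise joined. Carrying out this trade-off pins down $\delta(H)=\Omega(t\sqrt{\log t})$, giving the desired order of magnitude.

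The hard part is obtaining the \emph{tight} constant $3+o(1)$. A crude version of the above argument easily yields a bound of the form $C\,t\sqrt{\log t}$ for some absolute $C$, but squeezing the constant down to $3+o(1)$ requires Thomason's sharper potential-function analysis: rather than a single random colouring, one analyses a minor-minimal subgraph with edge/vertex ratio $c$, shows that every edge-contraction in it is ``dense'' (so adjacent pairs share many common neighbours), and then runs a refined extremal counting on the resulting structure to get the best possible $c=c(t)$. Since the paper uses \Cref{thm:dense-to-clique} purely as a black box, one could alternatively import the looser bound and lose only a multiplicative constant throughout; the main obstacle to a self-contained proof of the stated sharp version is precisely this potential-function optimisation, which lies outside the framework developed in this paper.
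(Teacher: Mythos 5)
There is a genuine gap here, and in fact two. First, the paper does not prove \Cref{thm:dense-to-clique} at all: it is the Kostochka--Thomason theorem, imported as a black box from \cite{kost} and \cite{thom}, and the constant $3+o(1)$ is precisely Thomason's sharp contribution. Your proposal concedes that your argument only yields $Ct\sqrt{\log t}$ for some absolute $C$ and that the stated constant would require Thomason's potential-function analysis, which you do not carry out. Since the statement you were asked to prove is the sharp form, deferring its actual content to an argument you do not supply is not a proof of the statement; and note that the paper genuinely uses only the order of magnitude, so if you want a self-contained treatment the honest move is to state and prove the $O(t\sqrt{\log t})$ version, not to assert the constant $3$.

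Second, even your sketch of the weaker bound does not go through as written. Reducing to a subgraph $H$ with $\delta(H)=\Omega(t\sqrt{\log t})$ is not enough for the random star-contraction step: consider a disjoint union (or a long chain) of cliques of size $\Theta(t\sqrt{\log t})$. The minimum-degree hypothesis holds, but two clusters $C_r, C_{r'}$ grown at red vertices lying in different cliques can never be joined by an edge, so your ``local density parameter'' $q$ is $0$ for almost all pairs and the bound $(1-q)^{\Theta(k^2)}\le t^{-(2+\Omega(1))}$ is simply false; the union bound over pairs of red vertices then collapses. (The conclusion of the theorem is of course still true for these examples, since each clique already contains $K_t$, but that only shows the method, not the theorem, is what fails.) The known proofs avoid exactly this by working with a minor-minimal graph of prescribed edge density, whose minimality forces every edge to lie in many triangles and hence neighbourhoods to be dense, or by first passing to a highly connected/expanding piece as in \cite{B-M}; some such structural reduction is indispensable before any random or greedy construction of the branch sets. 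A further quantitative slip: with $|R|\gg t$ a union bound over $\binom{|R|}{2}$ pairs needs per-pair failure probability $o(|R|^{-2})$ (or a deletion argument removing at most $|R|-t$ vertices), not merely $t^{-(2+\Omega(1))}$; this is fixable by taking $|R|=O(t)$, but it should be said.
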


We say that a graph $G$ is \emph{$d$-independent set expanding} if for any independent set $S$ in $G$ we have $|N(S)| \ge (d^{1-\eps}-1) |S|.$ We now prove our main result with a restriction on the maximum degree and assuming that independent sets of our graph expand. Both these assumptions will be easy to remove later.

\begin{thm}\label{thm:main-max-deg}
Let $s\ge 3$ be an integer, $\eps < \frac1{10(s-2)}$ and $d$ be large enough. Every $K_s$-free, $d$-independent set expanding graph $G$ with $\Delta(G)\le d$ has $K_d$ as a minor.
\end{thm}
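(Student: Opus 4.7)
The plan is to analyse the random minor $\M(G,p)$ for an appropriately chosen $p$ and to show that, with positive probability, its simple underlying graph has enough edges for Theorem \ref{thm:dense-to-clique} to deliver a $K_d$-minor of $M$ (and hence of $G$). Following the strategy announced just before Lemma \ref{lem:ramsey-asym}, for every $v\in V(G)$ I will construct a collection $\mathcal{P}_v$ of $3$-paths starting at $v$ with two competing properties: (i) $|\mathcal{P}_v|$ is as large as possible, so that by Lemma \ref{lem:activation-lb} the expected number of activations from $v$ is correspondingly large; and (ii) no endpoint $u\ne v$ is hit by too many paths of $\mathcal{P}_v$, so that by Lemma \ref{lem:activation-ub} applied with $d'=d^{1-\eps}-1$ (legitimate because expansion on a singleton gives $\delta(G)\ge d^{1-\eps}-1$) the expected number of pairs of simultaneously activated paths of $\mathcal{P}_v$ sharing an endpoint is of strictly smaller order. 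Writing $A_{v,u}$ for the number of activated paths of $\mathcal{P}_v$ ending at $u$ and using the elementary inequality $\mathbf{1}[A_{v,u}\ge 1]\ge A_{v,u}-\binom{A_{v,u}}{2}$, properties (i) and (ii) together provide a lower bound on the expected number of distinct neighbours of $v$ in the simple graph underlying $M$. Summing this over $v$ and using Lemma \ref{lem:chernoff} to bound $|V(M)|\le 2pn$ from above then yields an outcome with the desired average degree.

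To construct $\mathcal{P}_v$ I will use both hypotheses in tandem. Since $G$ is $K_s$-free, every neighbourhood is $K_{s-1}$-free; the inductive step inside the proof of Lemma \ref{lem:ramsey-asym} then guarantees that any $K_{s-1}$-free graph on $m$ vertices contains an independent set of size at least $m^{1/(s-2)}/2$. Combining this with $d$-independent-set-expansion (which gives $|N(S)|\ge (d^{1-\eps}-1)|S|$ for every independent $S$, in particular $|N(w)|\ge d^{1-\eps}-1$ for every vertex $w$), I would iteratively peel off independent sets to produce a tree-like structure of three layers $X_1\subseteq N(v)$, $X_2\subseteq N(X_1)\setminus(\{v\}\cup X_1)$, $X_3\subseteq N(X_2)\setminus(\{v\}\cup X_1\cup X_2)$, with each $X_i$ independent at its level and of carefully calibrated size. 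The $3$-paths of $\mathcal{P}_v$ are the branches of this tree, and the independence of each layer together with the $K_s$-free structure propagated through the construction is what allows one to control how often any single vertex is hit as an endpoint.

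The main obstacle is the simultaneous balancing of (i) and (ii). Taking large Ramsey-extracted independent sets at each layer helps (i) but risks many paths piling up on a common endpoint (hurting (ii)); shrinking the sets reverses this. The hypothesis $\eps<\frac{1}{10(s-2)}$ is calibrated precisely so that, after paying a $1/(s-2)$ exponent for the Ramsey extraction at each of the three layers and the quadratic cost of the pair-activation bound in Lemma \ref{lem:activation-ub}, the surplus expansion factor $d^{1-\eps}$ still dominates all the competing error terms. Carrying out this bookkeeping carefully, including the secondary cases of Lemma \ref{lem:activation-ub} in which two paths of $\mathcal{P}_v$ share an internal vertex (the $m\in\{2,3\}$ cases), is the technical heart of the argument.
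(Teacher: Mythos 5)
Your high-level plan coincides with the paper's: analyse $\M(G,p)$, build for each $v$ a collection $\mathcal{P}_v$ of $3$-paths rooted at $v$ by repeated applications of \Cref{lem:ramsey-asym} to the $K_{s-1}$-free neighbourhoods, lower-bound $\sum_u \pr(A_{v,u}\ge 1)$ via inclusion--exclusion using \Cref{lem:activation-lb,lem:activation-ub}, cap $|V(M)|$ by Chernoff, and finish with \Cref{thm:dense-to-clique}. The part you wave at, however --- ``the independence of each layer together with the $K_s$-free structure \ldots is what allows one to control how often any single vertex is hit as an endpoint'' --- is precisely where the argument must do real work, and layer-wise independence alone does not supply that control. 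Nothing in the $K_s$-free or expansion hypotheses stops one vertex $u$ at distance three from being adjacent to \emph{all} of the second-layer vertices reached through a single $w\in N(v)$, nor a single second-layer vertex $y$ from being reached from $v$ through many different first-layer vertices; in either case the quantities $\binom{A_{v,u}}{2}$ could a priori swamp $A_{v,u}$.

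What actually closes this in the paper are two explicit cleaning operations on the path system that your sketch does not contain. After covering $N(v)$ by independent sets $S_1,\dots,S_t$ (a cover of at least half of $N(v)$, not a single independent set per layer --- taking one set would already lose a polynomial factor), for each vertex of $N_i = N(S_i)\setminus(\{v\}\cup N(v))$ all but one edge to $S_i$ is deleted, so the middle edges of paths span stars (\Cref{claim:stars}); consequently each vertex $y$ preceding $u$ lies in at most one such star per $S_i$ and has degree at most $t$ in the auxiliary bipartite graph $B$ of middle edges. Symmetrically, after covering $N(w)\cap N_i$ by independent sets $S_{w,j}$, for each prospective endpoint at most one edge into each $S_{w,j}$ is marked ``permissible'', which gives \Cref{claim:multiple-paths}. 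Together these yield $\Delta(B)\le 4d^{1-1/(s-2)}$ (\Cref{claim:max-deg}), and it is exactly this degree bound that makes the two subtracted terms in the Bonferroni inequality of \Cref{claim:bound} small; the deletions cost only a constant factor in $|\mathcal{P}_v|$ because the expansion hypothesis is applied to the independent sets $S_{w,j}$ rather than to individual vertices (\Cref{claim:size}). Two smaller corrections: the relevant applications of \Cref{lem:activation-ub} are with $m=4$ (edge-disjoint middle edges) and $m=3$ (middle edges sharing one endpoint), not $m\in\{2,3\}$, since $m=2$ would force the two paths to coincide; and your third-layer set $X_3$ need not be disjoint from $X_2$ --- the paper only excludes $\{v\}\cup N(v)$ at depth three, which is what guarantees $X\cap Y=\emptyset$ in $B$.
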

\begin{proof}
Note first that since independent sets expand we have $\delta(G)\ge d'=d^{1-\eps}-1.$

Fix a vertex $v$. Since $G$ is $K_s$-free we know that $N(v)$ must be $K_{s-1}$-free. By \Cref{lem:ramsey-asym} this means there exist disjoint independent sets of vertices $S_1, \ldots, S_t \subseteq N(v)$ such that $|S_1|+\ldots +|S_t| \ge |N(v)|/2$ and $t \le 4|N(v)|^{1-1/(s-2)}$.

Let $N_i=N(S_i) \setminus (\{v\} \cup N(v))$. For every vertex in $N_i$ we delete all but $1$ edge towards $S_i$, leaving us with at least $|N_i|$
edges between $S_i$ and $N_i$ which split into stars with centres in $S_i$. In particular, this means that the remaining neighbourhoods of vertices in $S_i$ partition $N_i$. For each vertex $w \in S_i$ we apply \Cref{lem:ramsey-asym} to find a collection of at most $t_w \le 4d(w)^{1-1/(s-2)}$ disjoint independent sets $S_{w,1},\ldots,S_{w,t_w}\subseteq N(w)\cap N_i$ which cover at least half of $N(w)\cap N_i$, where we again used the fact that $N(w)$ is $K_{s-1}$-free. For every vertex in $N_{w,j}:=N(S_{w,j})\setminus (\{v\} \cup N(v))$ we mark one of its edges towards $S_{w,j}$ as \textit{permissible} for $S_{w,j}$. See \Cref{fig:cleaning} for an illustration. Note that despite what the figure might indicate we do allow $N_{w,j}$'s (so ``third level'' vertices) to intersect with some vertices among $S_{w,j}$'s (so ``second level'' vertices) however we require both sets to be disjoint from $v$ and $N(v)$.

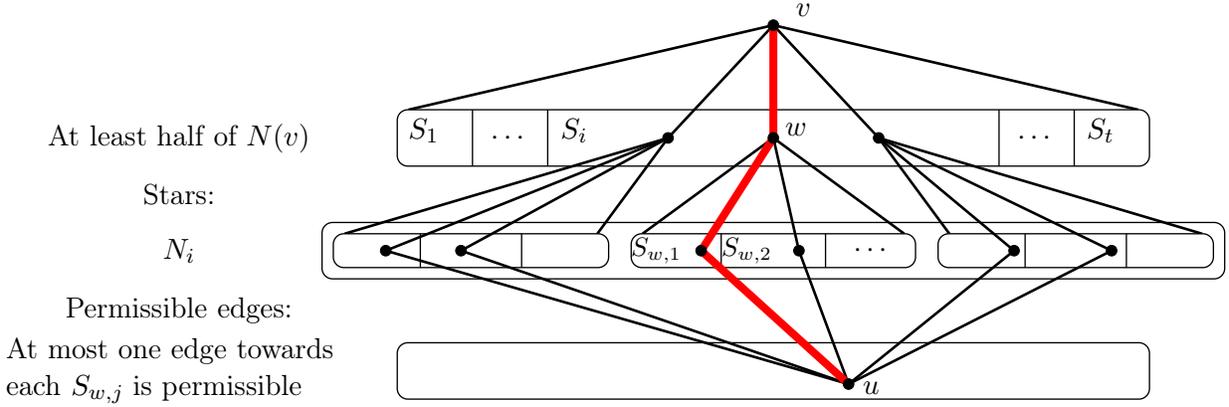
\begin{figure}
\centering
\begin{tikzpicture}[scale = 1,yscale=1, xscale=1]

\defPt{0}{1.25}{dy};
\defPt {-1}{-0.25}{a0};
\defPt {9}{-0.25}{a1};
\defPt {-1}{0.5}{a2};
\defPt {9}{0.5}{a3};
\defPt {-2}{-1.75}{b0};
\defPt {10}{-1.75}{b1};
\defPt {-2}{-1}{b2};
\defPt {10}{-1}{b3};

\defPt {-1}{-3.35}{c0};
\defPt {9}{-3.35}{c1};
\defPt {-1}{-2.6}{c2};
\defPt {9}{-2.6}{c3};

\defPtm{($0.5*(a0)+0.5*(a1)+1.5*(dy)$)}{v};

\draw[line width =1 pt] (v) -- ($(a2)+(0.15,0)$);
\draw[line width =1 pt] (v) -- ($(a3)-(0.15,0)$);

\node at ($0.5*(a0)+0.5*(a2)-(2.9,0)$) { At least half of $N(v)$ };
\node at ($0.5*(b0)+0.5*(b2)-(1.9,0)$) { $N_i$ };

\node at ($0.5*(b2)+0.5*(a0)-(2.4,0)$) { Stars: };
\node at ($0.5*(c2)+0.5*(b0)-(2.4,0)$) {Permissible edges:};
\node[text width = 5cm] at ($0.5*(c0)+0.5*(c2)-(2.7,0)$) { At most one edge towards each $S_{w,j}$ is permissible };

\draw[rounded corners, fill=white, line width =0.5 pt] (a0) rectangle (a3);
\draw[rounded corners, fill=white, line width =0.5 pt] (b0) rectangle (b3);
\draw[rounded corners, fill=white, line width =0.5 pt] (c0) rectangle (c3);

\defPtm{($0.1*(a2)+0.9*(a3)$)}{a13};
\defPtm{($0.1*(a0)+0.9*(a1)$)}{a03};
\defPtm{($0.2*(a2)+0.8*(a3)$)}{a12};
\defPtm{($0.2*(a0)+0.8*(a1)$)}{a02};
\defPtm{($0.8*(a2)+0.2*(a3)$)}{a11};
\defPtm{($0.8*(a0)+0.2*(a1)$)}{a01};
\defPtm{($0.9*(a2)+0.1*(a3)$)}{a10};
\defPtm{($0.9*(a0)+0.1*(a1)$)}{a00};

\defPtm{($0.33*(b2)+0.67*(b3)$)}{b13};
\defPtm{($0.33*(b0)+0.67*(b1)$)}{b03};

\defPtm{($0.67*(b2)+0.33*(b3)$)}{b11};
\defPtm{($0.67*(b0)+0.33*(b1)$)}{b01};

\draw[line width =0.5 pt] (a13) -- (a03);
\draw[line width =0.5 pt] (a12) -- (a02);
\draw[line width =0.5 pt] (a11) -- (a01);
\draw[line width =0.5 pt] (a10) -- (a00);

\node at ($0.5*(a02)+0.5*(a13)$) {$\cdots$};
\node at ($0.5*(a00)+0.5*(a11)$) {$\cdots$};

\node at ($0.5*(a0)+0.5*(a2)+(0.35,0.1)$) {$S_1$};
\node at ($0.5*(a11)+0.5*(a01)+(0.35,0.1)$) {$S_i$};
\node at ($0.5*(a13)+0.5*(a03)+(0.35,0.1)$) {$S_t$};

\defPt{1.6}{0}{ws}
\defPt{1.4}{0}{wd}

\defPtm{($0.5*(a11)+0.5*(a01)+(ws)$)}{w1};
\defPtm{($0.5*(a11)+0.5*(a01)+(ws)+(wd)$)}{w2};
\defPtm{($0.5*(a11)+0.5*(a01)+(ws)+2*(wd)$)}{w3};

\draw[line width =1 pt] (w1) -- ($(b11)-(0.3,0.15)$);
\draw[line width =1 pt] (w1) -- ($(b2)-(-0.3,0.15)$);

\draw[line width =1 pt] (w2) -- ($(b13)-(0.3,0.15)$);
\draw[line width =1 pt] (w2) -- ($(b11)-(-0.3,0.15)$);

\draw[line width =1 pt] (w3) -- ($(b3)-(0.3,0.15)$);
\draw[line width =1 pt] (w3) -- ($(b13)-(-0.3,0.15)$);

\draw[rounded corners, fill=white, line width =0.5 pt] ($(b0)+(0.15,0.15)$) rectangle ($(b11)-(0.15,0.15)$);
\draw[rounded corners, fill=white, line width =0.5 pt] ($(b01)+(0.15,0.15)$) rectangle ($(b13)-(0.15,0.15)$);
\draw[rounded corners, fill=white, line width =0.5 pt] ($(b03)+(0.15,0.15)$) rectangle ($(b3)-(0.15,0.15)$);

\draw[line width =0.5 pt] ($0.67*(b2)+0.33*(b11)-(0,0.15)$) -- ($0.67*(b0)+0.33*(b01)+(0,0.15)$);
\draw[line width =0.5 pt] ($0.33*(b2)+0.67*(b11)-(0,0.15)$) -- ($0.33*(b0)+0.67*(b01)+(0,0.15)$);
\draw[line width =0.5 pt] ($0.67*(b11)+0.33*(b13)-(0,0.15)$) -- ($0.67*(b01)+0.33*(b03)+(0,0.15)$);
\draw[line width =0.5 pt] ($0.33*(b11)+0.67*(b13)-(0,0.15)$) -- ($0.33*(b01)+0.67*(b03)+(0,0.15)$);
\draw[line width =0.5 pt] ($0.67*(b13)+0.33*(b3)-(0,0.15)$) -- ($0.67*(b03)+0.33*(b1)+(0,0.15)$);
\draw[line width =0.5 pt] ($0.33*(b13)+0.67*(b3)-(0,0.15)$) -- ($0.33*(b03)+0.67*(b1)+(0,0.15)$);

\node at ($0.335*(b03)+0.335*(b13)+0.165*(b3)+0.165*(b1)+(-3.7,0)$) {\small $S_{w,2}$};
\node at ($0.5*(b03)+0.5*(b13)-(3.6,0)$) {\small $S_{w,1}$};
\node at ($0.165*(b03)+0.335*(b1)+0.5*(b3)-(4.025,0)$) {$\cdots$};

\defPt{1}{0}{dlta}

\defPtm{($0.5*(b03)+0.5*(b13)+1.16*(dlta)$)}{sw1};

\defPtm{($0.335*(b03)+0.335*(b13)+0.165*(b3)+0.165*(b1)+1.15*(dlta)$)}{sw2};
\draw (sw2) \smvx;

\draw[line width =1 pt] (w3) -- (sw1);
\draw[line width =1 pt] (w3) -- (sw2);

\defPtm{($0.5*(b03)+0.5*(b13)-3*(dlta)$)}{sw4};

\defPtm{($0.5*(b03)+0.5*(b13)-1.7*(dlta)$)}{sw3};
\draw (sw3) \smvx;

\defPtm{($0.335*(b03)+0.335*(b13)+0.165*(b3)+0.165*(b1)-8.5*(dlta)$)}{sw6};
\draw (sw6) \smvx;

\defPtm{($0.335*(b03)+0.335*(b13)+0.165*(b3)+0.165*(b1)-7.5*(dlta)$)}{sw5};
\draw (sw5) \smvx;

\defPt{0}{0.25}{dltay}
\defPtm{($0.4*(c0)+0.6*(c3)-(dltay)$)}{u};

\draw[line width =1 pt] (w2) -- (sw3);
\draw[line width =1 pt] (w2) -- (sw4);

\draw[line width =1 pt] (w1) -- (sw5);
\draw[line width =1 pt] (w1) -- (sw6);

\foreach \i in {1,2,3}
{
\draw[line width =1 pt] (v) -- (w\i);
}

\foreach \i in {1,...,6}
{
    \draw[line width =1 pt] (u) -- (sw\i);
}

\draw[line width = 3 pt,red] (u) -- (sw4) -- (w2) -- (v);

\draw (sw4) \smvx;
%\draw (sw3) \smvx;

\draw (sw1) \smvx;

\draw (u) \smvx;

\node at ($(w2)+(0.3,0.12)$) {$w$};

\foreach \i in {1,2,3}
{
  \draw (w\i) \smvx;   
}

\draw (v) \smvx;   
\node at ($(v)+(0.4,0.2)$) {$v$};
\node at ($(u)+(0.3,-0.05)$) {$u$};
\end{tikzpicture}

    \caption{Illustration of the result of the cleaning procedure for $S_i$, with a $3$-path in $\mathcal{P}_i$ drawn red. Every vertex in $N_i$ has only one edge remaining towards $S_i$, so the remaining edges between $S_i$ and $N_i$ span stars. We mark at most one edge from each vertex $u$ adjacent to $S_{w,j}$ as permissible for $S_{w,j}$.}\label{fig:cleaning}
\end{figure}

We now build a collection of $3$-paths $\mathcal{P}_i$ as follows. Every path in $\mathcal{P}_i$ starts with $v$ then proceeds to a $w \in S_i$ then to a vertex in $S_{w,j}$ for some $j$ along one of the remaining edges between $S_i$ and $N_i$ and finally follows an $S_{w,j}$-permissible edge. Let us first show some properties of $\mathcal{P}_i$ that we will use. 

\begin{claim}\label{claim:stars}
Middle edges of paths in $\mathcal{P}_i$ span stars.
\end{claim} \vspace{-0.5 cm}
\begin{proof}
This is immediate since we removed all but one edge of each vertex in $N_i$ towards $S_i$.
\end{proof} \vspace{-0.5cm}

\begin{claim}\label{claim:multiple-paths}
For any $w \in S_i$ and $u \in V(G)\setminus (\{v\} \cup N(v))$ there are at most $4d^{1-\frac1{s-2}}$ paths in $\mathcal{P}_i$ passing through $w$ and ending with $u$.
\end{claim} \vspace{-0.5 cm}
\begin{proof}
This claim follows since the last edge of any such path in $\mathcal{P}_i$ must be $S_{w,j}$-permissible for some $j$ and any vertex $u$ sends at most one permissible edge towards each $S_{w,j}$. Therefore, there can be at most $t_w \le 4d^{1-\frac1{s-2}}$ such paths in $\mathcal{P}_i$.
\end{proof} \vspace{-0.5cm}

\begin{claim}\label{claim:size}
$|\mathcal{P}_i| \ge d'^2|S_i|/4-2d'd.$
\end{claim} \vspace{-0.5 cm}
\begin{proof}
This follows since any $S_{w,j}$-permissible edge gives rise to a distinct $3$-path in our construction. Since we mark precisely one such edge for each vertex in $N_{w,j}$ we get 

\begin{align*}
    |\mathcal{P}_i|= \sum_{w \in S_i, j \in [t_w]}|N_{w,j}| &\ge \sum_{w \in S_i, j \in [t_w]} (d'|S_{w,j}|-(d+1))\\
    &\ge d'|N_i|/2-|S_i|\cdot 4d^{1-1/(s-2)}(d+1) \\
    &\ge d'^2|S_i|/2-d'(d+1)-|S_i|\cdot 8 d^{2-1/(s-2)} \ge d'^2|S_i|/4-2d'd
\end{align*}
where in the first inequality we used the fact $S_{w,j}$ is independent so by the expansion property $|N_{w,j}| \ge d'|S_{w,j}|-|N(v)|-1$. In the second inequality we used the fact that $\sum_{j \in [t_w]} |S_{w,j}| \ge |N(w) \cap N_i|/2$ and $\cup_{w \in S_i} N(w) \supseteq N_i$ to bound the first term and $t_w \le 4d^{1-1/(s-2)}$ to bound the second. In the third inequality we used $|N_i|\ge |S_i|d'-(|N(v)|+1)$ which holds by the expansion property since $S_i$ is independent. In the last inequality we used $ \eps < \frac{1}{2(s-2)}$ and $d$ (so also $d'$) large enough.  
\end{proof} \vspace{-0.3cm}

Let $\mathcal{P}:=\bigcup_i \mathcal{P}_i.$ Applying the above claim for each $i$ we obtain that for large enough $d$:
\begin{equation}\label{eq:number-of-paths}
|\mathcal{P}| \ge \frac{d'^2}{4}\sum_{i=1}^t|S_{i}|-2td'd\ge \frac{d'^3}{8}-8d'd^{2-1/(s-2)} \ge \frac{d'^3}{16}.
\end{equation}

Let us fix another vertex $u$ and look at the collection $\mathcal{P}_{vu}$ of paths in $\mathcal{P}$ ending in $u$. Let $X$ be the set of vertices following $v$ (so in $N(v)$) on these paths and $Y$ the sets of vertices preceding $u$ on these paths. Since we excluded $N(v)$ from our $N_i$'s we have $X \cap Y = \emptyset.$ Consider the bipartite subgraph $B$ with bipartition given by $X$ and $Y$ and edges coming from middle edges of paths in $\mathcal{P}_{vu}$. 
\begin{claim}\label{claim:max-deg}
$\Delta(B)\le 4d^{1-1/(s-2)}.$
\end{claim} \vspace{-0.3cm}
\begin{proof}
For any $w$ in $X$, we know by \Cref{claim:multiple-paths} that there are at most $4d^{1-1/(s-2)}$ paths in $\mathcal{P}_{uv}$ so $w$ can have at most this many neighbours in $B$. For any vertex $y \in Y$, \Cref{claim:stars} implies $y$ has at most one edge towards any $S_i$; in particular, it has degree at most $t\le 4d^{1-1/(s-2)}$ in $B$.
\end{proof} \vspace{-0.3cm}

Let $p=2^{10}d^{2\eps-\frac{1}{2(s-2)}}$ and $M\sim \M(G,p).$
\begin{claim}\label{claim:bound}
The probability that some path in $\mathcal{P}_{vu}$ activates for $M$ is at least $\frac{|\mathcal{P}_{vu}|}{2^{9}d^2}.$
\end{claim} 
\begin{proof}
Let $a:=|\mathcal{P}_{vu}|$, so by \Cref{claim:max-deg} and the fact that parts of $B$ have size at most $d$, we get $|E(B)|=a \le 4d^{2-1/(s-2)}$. We trivially have at most $a^2$ pairs of independent edges in $B$. On the other hand, the number of pairs of edges sharing a vertex is at most $a\Delta(B)\le 4ad^{1-1/(s-2)}$, since the first edge we can choose in $a$ ways at which point we have $2\Delta(B)$ choices for the second and we count every pair twice.  %for real $0< \eta < x \le y$ we have $x^2+y^2 < (x-\eta)^2+(y+\eta)^2.$ 

Let us denote by $A_e$ the event that edge $e$ in $B$ activated path $veu.$ By the inclusion-exclusion principle,
\begin{align*}
\P\left(\bigcup_{e\in B} A_e\right)
    &\ge \sum_{e\in B}\P(A_e)-\sum_{\{e, f\}\subseteq B}\P(A_e \cap A_f)\\
    & = \sum_{e\in B}\P(A_e)-\sum_{\substack{\{e, f\}\subseteq B, \\e \cap f = \emptyset}}\P(A_e \cap A_f)-\sum_{\substack{\{e, f\}\subseteq B, \\|e \cap f|=1}}\P(A_e \cap A_f)\\
    &\ge a \cdot \frac{1}{2^7d^2}-a^2\cdot \frac{2^9}{p^2d'^4}-4ad^{1-1/(s-2)}\cdot  \frac{2^{7}}{pd'^3} \ge \frac{a}{2^{9}d^2},
\end{align*}
where in the second inequality to bound $\P(A_e)$ we used \Cref{lem:activation-lb} while to bound $\P(A_e \cap A_f)$ we used \Cref{lem:activation-ub} with $m=4$ if $e \cap f=\emptyset$ and with $m=3$ if $|e \cap f|=1$; the conditions of the lemmas are easily seen to hold for our choice of $p$ when $d$ is large enough. In the last inequality we used $\frac{a}{p^2d'^4} \le \frac{4d^{2-1/(s-2)}}{p^2d'^4} \le 8d^{-2+4\eps-1/(s-2)}/p^2= 2^{-17}/d^2$ and $\frac{d^{1-1/(s-2)}}{pd'^3}\le 2^{-11}d^{-2+\eps-\frac1{2(s-2)}} \le 2^{-19}/d^2$, for large enough $d$.  
\end{proof} \vspace{-0.3cm}

Using \Cref{claim:bound} and the lower bound on the total number of paths in $\mathcal{P}$ given in \eqref{eq:number-of-paths} we obtain that the expected number of distinct neighbours of $v$ in $M$ is at least $\frac{\sum_u |\mathcal{P}
_{vu}|}{2^{9}d^2}\ge \frac{d'^3}{2^{13}d^2}\ge 2^{-14}d^{1-3 \eps}.$ As $v$ was a fixed and arbitrary vertex of $G$ in the above argument, we conclude that the expected number of non-parallel edges in $M$ is at least $n2^{-15}d^{1-3 \eps}$. 

By the Chernoff bound (\Cref{lem:chernoff}), $M$ has more than $2pn$ vertices with probability at most $e^{-np/3}$, and each such outcome can contribute at most $n^2$ edges to our expectation. So we can find an outcome with at most $2pn$ vertices and at least $n2^{-15}d^{1-3 \eps}-n^2e^{-np/3}$ edges, so of average degree at least $n2^{-14}d^{1-3 \eps}/(2pn)-np^{-1}e^{-pn/3}\ge 2^{-26}d^{1-5 \eps+\frac{1}{2(s-2)}} \gg d \sqrt{\log d}$ since $\eps<\frac1{10(s-2)}$. So we can find a $K_d$ minor by \Cref{thm:dense-to-clique}.
\end{proof} \vspace{-0.3cm}

Let us first remove the requirement that independent sets expand. 
\begin{cor}\label{cor:main-max-deg}
Let $s\ge 3$ be an integer, $\eps < \frac1{10(s-2)}$ and $d$ be large enough. Every $K_s$-free graph $G$ on $n$ vertices with $\Delta(G)\le d$ and no $K_d$ minor has $\alpha(G) \ge \frac n{d^{1-\eps}}$.
\end{cor}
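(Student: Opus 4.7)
The plan is to argue by contradiction: suppose $\alpha(G) < n/d^{1-\eps}$, and derive a $K_d$ minor in $G$ by reducing to \Cref{thm:main-max-deg}. The only obstruction to invoking that theorem directly is the independent set expansion hypothesis, so the strategy is to peel off the parts of $G$ that witness its failure, one at a time, and apply the theorem to what remains.

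Formally, I would define a decreasing sequence of induced subgraphs $G = G_0 \supseteq G_1 \supseteq G_2 \supseteq \dots$ as follows. At step $i$, if $G_i$ is $d$-independent set expanding, halt; otherwise, fix an independent set $S_i \subseteq V(G_i)$ witnessing the failure, meaning $|N_{G_i}(S_i)| < (d^{1-\eps}-1)|S_i|$, and set $V(G_{i+1}) := V(G_i) \setminus (S_i \cup N_{G_i}(S_i))$. Since $|V(G_{i+1})| < |V(G_i)|$, the process terminates at some graph $G'$ which is $d$-independent set expanding.

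The key claim is that $S^* := \bigcup_i S_i$ is independent in $G$. For $u \in S_i$ and $v \in S_j$ with $i < j$, one has $v \in V(G_j) \subseteq V(G_{i+1})$, so in particular $v \notin N_{G_i}(S_i)$; combined with $u \in S_i \subseteq V(G_i)$, this rules out $uv \in E(G)$. The case $i=j$ is immediate since $S_i$ itself is independent. On the other hand, each step removes fewer than $|S_i| + (d^{1-\eps}-1)|S_i| = d^{1-\eps}|S_i|$ vertices from the vertex set, so the total number of vertices ever removed is strictly less than $d^{1-\eps}|S^*| \le d^{1-\eps}\alpha(G) < n$. Hence $G'$ is nonempty.

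To finish, note that $G'$ inherits $K_s$-freeness and the degree bound $\Delta(G') \le d$ from $G$, and by construction it is $d$-independent set expanding. Applying \Cref{thm:main-max-deg} to $G'$ (valid since $d$ is large enough) produces a $K_d$ minor in $G'$, and therefore in $G$, contradicting the hypothesis. The main subtlety is verifying that $S^*$ really is an independent set of $G$ (and not merely of the various $G_i$ separately), which is what justifies the bookkeeping inequality $\sum_i|S_i| \le \alpha(G)$; everything else is a short calculation.
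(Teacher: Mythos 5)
Your proof is correct and takes essentially the same approach as the paper, which phrases the peeling as an induction on $n$: if $G$ is not $d$-independent set expanding, remove one witnessing independent set $S$ together with $N(S)$, apply the inductive hypothesis to $G\setminus(S\cup N(S))$, and take the union of $S$ with the independent set supplied inductively. Your explicit iterative sequence $G_0\supseteq G_1\supseteq\cdots$ with the union $S^*=\bigcup_i S_i$ is precisely what the paper's induction constructs when unrolled, so the two arguments are equivalent, with yours merely recast as a counting contradiction.
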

\begin{proof}
The proof is by induction on the number $n$ of vertices of $G$. The claim is trivial in the base case $n \le d^{1-\eps}$. 
%Now assume the claim is true for any graph on at most $n-1$ vertices. 
As $G$ has no $K_d$ minor, \Cref{thm:main-max-deg} implies it cannot be $d^{1-\eps}$-independent set expanding. In other words, there is an independent set $S$ in $G$ with $|N(S)| < (d^{1-\eps}-1)|S|$. Then $G'=G \setminus (S \cup N(S))$ still has $\Delta(G') \le \Delta(G) \le d$, is $K_s$-free and has no $K_d$ minor, so by the inductive assumption it has an independent set $S'$ of size $|G'|/d^{1-\eps}$. There are no edges between $S$ and $S'$ since $N(S) \cap G' = \emptyset$, so $S \cup S'$ is an independent set of size $|S|+|G'|/d^{1-\eps} \ge (|S|+N(S)+|G'|)/d^{1-\eps}=n/d^{1-\eps}$, as desired.
\end{proof}

Let us finally remove the restriction on the maximum degree to obtain our main result.

\begin{proof}[ of \Cref{thm:main-r}]
Note that $G$ has average degree at most $d\sqrt{\log d}$ as otherwise \Cref{thm:dense-to-clique} implies it has a $K_d$ minor.
This implies that there can be at most $n/2$ vertices of degree at least $2d \sqrt{\log d}$ in $G$. In particular, there is an induced subgraph of order at least $n/2$ with maximum degree at most $2d \sqrt{\log d}.$ Applying our \Cref{cor:main-max-deg} (with $2d \sqrt{\log d}$ in place of $d$ and for some $\eps'$ which satisfies $\eps< \eps' < \frac1{10(s-2)}$), we obtain $ \alpha(G) \ge \frac{n/2}{(2d\sqrt{\log d})^{1-\eps'}} \ge \frac{n}{d^{1-\eps}},$ using $d$ is large enough and $\eps<\eps'$, completing the proof.
\end{proof} 

\section{Concluding remarks}
We proved that every $K_s$-free graph $G$ on $n$ vertices has a clique minor of order polynomially larger than $n/\alpha(G)$, which would be implied by Hadwiger's conjecture. In particular, we can take any power smaller than $1+\frac{1}{10(s-2)-1}.$ Examples based on random graphs used to bound the Ramsey number $R(s,k)$ for $s$ fixed and $k$ large (see \cite{bohman-keevash} and references therein) are $K_s$-free and have no clique minor of order $(n/\alpha(G))^{1+\frac{1}{s-1}+o(1)}$, showing that our result is best possible up to a constant factor in front of the $\frac{1}{s-2}$ term. In terms of the constant factor, being more careful with our bounds one can easily improve the $1/10$ factor to $1/8$ and with more work even a bit further. It would be interesting to determine the best possible exponent of $n/\alpha(G)$ in our result.

\section*{Acknowledgements}

We are extremely grateful to the anonymous referees for their careful reading of the paper and many useful suggestions and comments.

\vspace{-0.09cm}

%\textbf{Remark:} It is not hard to improve $1/10$ to $1/8$ in the above argument by using the fact that for independent edges $e,f$ if we are given a colouring from step 1. of our procedure then events $A_e$ and $A_f$ from \Cref{claim:bound} are independent. It is also possible to further improve the constant, but reaching the conjectured $1/s$ seems to require new ideas.

%\textbf{Remark:} For the case of $s=3$, so when we assume graph is triangle free the above theorem recovers a result of Dvorak and Yepremyan with a better bound on $\eps$ ($1/10$ instead of $1/26$). In this case it is possible to use an even simpler variant of the above argument in which after the same cleaning procedure (which is much simpler since neighbourhoods are independent sets) one simply looks at what is the largest number of independent edges between neighbourhoods of two vertices and calculates what is the probability that one of them activates.

%\providecommand{\bysame}{\leavevmode\hbox to3em{\hrulefill}\thinspace}
%\providecommand{\href}[2]{#2}

\end{document}